\documentclass[a4paper]{amsart}
  \usepackage{amssymb,exscale,bbm,mathrsfs,nicefrac,stmaryrd}
  \usepackage[symbol]{footmisc} 
  \usepackage[british]{babel}
  \usepackage{tikz}
     \usetikzlibrary{decorations.pathmorphing,patterns,mindmap,trees,arrows}

 \newtheorem{theorem}{Theorem}[section]
 
 \newtheorem{lemma}[theorem]{Lemma}
 \newtheorem{proposition}[theorem]{Proposition}
 \theoremstyle{definition}
 
 \theoremstyle{remark}

 \numberwithin{equation}{section}

\begin{document}

%
%
  \renewcommand {\Re}{\text{Re}}
  \renewcommand {\Im}{\text{Im}}
  \newcommand {\sfrac}[2] { {{}^{#1}\!\!/\!{}_{#2}}} 
  \newcommand {\pihalbe}{\sfrac{\pi}2}
  \newcommand {\einhalb}{\sfrac{1}2}
  \newcommand {\Borel}{{\mathcal Bo}}
  \newcommand {\Poisson}{{\mathscr P}}
  \newcommand {\CC}{\mathbb C}
  \newcommand {\DD}{\mathbb D}
  \newcommand {\EE}{\mathbb E}
  \newcommand {\NN}{\mathbb N}
  \newcommand {\ZZ}{\mathbb Z}
  \newcommand {\RR}{\mathbb R}
  \newcommand {\calB}{\mathcal B}
  \newcommand {\calC}{\mathcal C}
  \newcommand {\calE}{\mathcal E}
  \newcommand {\calT}{\mathcal T}
  \newcommand {\calS}{\mathscr S}
  \newcommand {\BOUNDED}{\mathcal B}
  \newcommand {\DOMAIN}{\mathcal D}
  \newcommand {\FOURIER}{\mathcal F}
  \newcommand {\LAPLACE}{\mathcal L}
  \newcommand {\eins} {\mathbbm 1}
  \newcommand {\al}{\alpha}
  \newcommand {\la}{\lambda}
  \newcommand {\eps}{\varepsilon}
  \newcommand {\calO} {\mathcal O}
  \newcommand {\Ga}{\Gamma}
  \newcommand {\Hinfty}{\mathrm{H}^\infty}
  \newcommand {\ga}{\gamma}
  \newcommand {\om}{\omega}
  \newcommand {\Xm}{X_{-1}}
  \newcommand {\eR}{\mathcal R}
  \newcommand {\Sec}[1] {S({#1})}
  \newcommand {\norm}[1] {\| #1 \|}  
  \newcommand {\lrnorm}[1]{\left\| #1 \right\|}
  \newcommand {\bignorm}[1]{\bigl\| #1 \bigr\|}
  \newcommand {\Bignorm}[1]{\Bigl\| #1 \Bigr\|}
  \newcommand {\Biggnorm}[1]{\Biggl\| #1 \Biggr\|}
  \newcommand {\biggnorm}[1]{\biggl\| #1 \biggr\|}
  \newcommand {\Bigidual}[3] {\Bigl\langle #1, #2 \Bigr\rangle_{#3}}
  \newcommand {\bigidual}[3] {\bigl\langle #1, #2 \bigr\rangle_{#3}}
  \newcommand {\idual}[3] {\langle #1, #2 \rangle_{#3}}
  \newcommand {\Bigdual}[2] {\Bigidual{#1}{#2}{}}
  \newcommand {\bigdual}[2] {\bigidual{#1}{#2}{}}
  \newcommand {\dual}[2] {\idual{#1}{#2}{} }
  \newcommand {\Id} {\mathrm{Id}}
  \newcommand {\suchthat}{:\;}
  \newcommand {\stolz}[1] {\mathrm{Stolz}_{#1}}
  \newcommand{\GB}[1]{\left\llbracket\,#1\,\right\rrbracket^{\gamma}} 
  \newcommand{\RB}[1]{\left\llbracket\,#1\,\right\rrbracket^{\eR}} 
  \newcommand{\ud}{\mathrm{d}}
  \newcommand{\ue}{\mathrm{e}}
  \newcommand{\ui}{\mathrm{i}}
  \newcommand{\sprod}[2]{\left[#1|#2\right]}
  \newcommand{\dprod}[2]{\left\langle#1,#2\right\rangle}

\renewcommand{\labelenumi} {(\alph{enumi})}    
\renewcommand{\labelenumii}{(\roman{enumii})}
\renewcommand{\theenumi} {(\alph{enumi})}      
\renewcommand{\theenumii}{(\roman{enumii})}    


%
%

\newcounter{aufzi}
\newenvironment{aufzi}{\begin{list}{ {\upshape(\alph{aufzi})}}{
        \usecounter{aufzi}
        \topsep1ex
        \parsep0cm
        \itemsep1ex
        \leftmargin0.8cm
        \labelwidth0.5cm
        \labelsep0.3cm
}}
{\end{list}}

\newcounter{aufzii}
\newenvironment{aufzii}{\begin{list}{\hfill {\upshape 
(\roman{aufzii})}}{
        \usecounter{aufzii}
        \topsep1ex
        \parsep0cm
        \itemsep1ex
        \leftmargin0.8cm
        \labelwidth0.5cm
        \labelsep0.3cm
         \itemindent0cm
}}
{\end{list}}

\renewcommand{\theaufzi}{(\alph{aufzi})}
\renewcommand{\theaufzii}{(\roman{aufzii})}

\newcommand\irregularcircle[2]{
  \pgfextra {\pgfmathsetmacro\len{(#1)+rand*(#2)}}
  +(0:\len pt)
  \foreach \a in {10,20,...,350}{
    \pgfextra {\pgfmathsetmacro\len{(#1)+rand*(#2)}}
    -- +(\a:\len pt)
  } -- cycle
}
\pgfmathsetmacro\sprayRadius{.2pt}
\pgfmathsetmacro\sprayPeriod{.5cm}
\pgfdeclarepatternformonly{spray}{\pgfpoint{-\sprayRadius}{-\sprayRadius}}{\pgfpoint{1cm + \sprayRadius}{1cm + \sprayRadius}}{\pgfpoint{\sprayPeriod}{\sprayPeriod}}{
    \foreach \x/\y in {2/53,6/52,11/48,23/49,20/47,32/46,41/47,47/51,56/52,46/44,4/43,16/42,33/41,41/37,49/35,55/31,37/35,44/30,28/37,24/36,17/37,7/38,0/31,8/29,18/31,28/30,37/28,30/27,46/24,51/21,24/23,12/24,4/21,18/19,12/16,31/21,38/18,26/16,46/16,56/12,52/10,45/8,51/4,37/12,35/7,24/9,14/9,2/12,8/6,15/4,27/0,34/1,40/1} {
        \pgfpathcircle{\pgfpoint{(\x + random()) / 57 * \sprayPeriod}{\sprayPeriod - (\y + random()) / 55 * \sprayPeriod}}{\sprayRadius}
    }
    \pgfusepath{fill}
}

\newcommand{\RittKalkuel}{
  \begin{tikzpicture}[scale=3]
      \draw (0,0) circle (1);
      \pgfmathsetlengthmacro{\A}{2}
      \draw[samples=500,scale=28.5,domain=0:720, smooth, variable=\t] plot (  {cos(\t)*(-sqrt(-2*\A*\A * cos(\t) + 2*\A*\A + cos(\t)*cos(\t) - 1) + \A*\A - cos(\t))/(\A*\A-1)} , {sin(\t)*(-sqrt(-2*\A*\A * cos(\t) + 2*\A*\A + cos(\t)*cos(\t) - 1) + \A*\A - cos(\t))/(\A*\A-1)} );

            \pgfmathsetlengthmacro{\A}{3}
      \draw[samples=500,scale=28.5,domain=0:200, smooth, variable=\t, thick, ->] plot (  {cos(\t)*(-sqrt(-2*\A*\A * cos(\t) + 2*\A*\A + cos(\t)*cos(\t) - 1) + \A*\A - cos(\t))/(\A*\A-1)} , {sin(\t)*(-sqrt(-2*\A*\A * cos(\t) + 2*\A*\A + cos(\t)*cos(\t) - 1) + \A*\A - cos(\t))/(\A*\A-1)} );
      \draw[samples=500,scale=28.5,domain=200:720, smooth, variable=\t, thick] plot (  {cos(\t)*(-sqrt(-2*\A*\A * cos(\t) + 2*\A*\A + cos(\t)*cos(\t) - 1) + \A*\A - cos(\t))/(\A*\A-1)} , {sin(\t)*(-sqrt(-2*\A*\A * cos(\t) + 2*\A*\A + cos(\t)*cos(\t) - 1) + \A*\A - cos(\t))/(\A*\A-1)} );

            \pgfmathsetlengthmacro{\A}{4}
      \draw[samples=500,scale=28.5,domain=0:720, smooth, variable=\t, dotted,pattern=spray] plot (  {cos(\t)*(-sqrt(-2*\A*\A * cos(\t) + 2*\A*\A + cos(\t)*cos(\t) - 1) + \A*\A - cos(\t))/(\A*\A-1)} , {sin(\t)*(-sqrt(-2*\A*\A * cos(\t) + 2*\A*\A + cos(\t)*cos(\t) - 1) + \A*\A - cos(\t))/(\A*\A-1)} );

    \draw[lightgray, fill=lightgray] plot [smooth cycle] coordinates {(0.99, 0) (0.5,0.3) (-0.2, 0.1) (-0.1, -0.38)  (0.44,-0.2)  } ;

      \draw[->] (-1.2, 0) -- (1.2, 0) node[right] {$x$};
      \draw[->] (0, -1.2) -- (0, 1.2) node[above] {$y$};
      \draw (0.3, 0.1) node {$\sigma(T)$} ;
      \draw (-0.55, 0.1) node {$\gamma$};
    \end{tikzpicture}
}

\newcommand{\StolzWinkel}{
%
    \begin{tikzpicture}[scale=3]
      \pgfmathsetlengthmacro{\A}{2}
      \draw[thick]  (0,0) circle (1);
      \draw[lightgray, fill=lightgray]   (1, 0) to (0.75,  0.433012701892219 ) to (0.75,  -0.433012701892219 ) --cycle ;   
      \draw[black, fill=lightgray]  (0, 0) circle (0.866025403784439) ; 
      \draw[fill=gray, thick, samples=500,scale=28.5,domain=0:720, smooth, variable=\t] plot (  {cos(\t)*(-sqrt(-2*\A*\A * cos(\t) + 2*\A*\A + cos(\t)*cos(\t) - 1) + \A*\A - cos(\t))/(\A*\A-1)} , {sin(\t)*(-sqrt(-2*\A*\A * cos(\t) + 2*\A*\A + cos(\t)*cos(\t) - 1) + \A*\A - cos(\t))/(\A*\A-1)} );
      \draw (0.2, -0.2) node {$\stolz{\alpha}$};
      \draw   (1,0) to (0.25, 1.299038105676658) ;
      \draw   (1,0) to (0.25,-1.299038105676658) ;
      \draw   (0.75,  0.433012701892219 ) to (0,0) ; 
      \draw   (0.375, 0.22) node[above] {$r$} ; 
      \begin{scope}[even odd rule]
        \clip  { (0,0) to (0.75,  0.433012701892219 ) to  (0.5, 0.866025403784439) --cycle };
        \draw (0.75,  0.433012701892219 ) circle (0.07)  ;
        \draw[fill=black] (0.715, 0.445) circle (0.005) ; 
      \end{scope}
      \draw   (-0.5, 0.5) node {$B_r$} ;
      \draw  (0, 0) circle (0.866025403784439) ; 
      \draw[->] (-1.2, 0) -- (1.2, 0) node[right] {$x$};
      \draw[->] (0, -1.2) -- (0, 1.2) node[above] {$y$};
  \end{tikzpicture}
}

\allowdisplaybreaks

\title[Ritt operators and their functional calculus]
      {Remarks on Ritt operators, their $\Hinfty$ functional calculus and associated square function  estimates}
\author[Bernhard H. Haak]{Bernhard H. Haak}

\address{%
Institut de Math\'ematiques de Bordeaux\\Universit\'e Bordeaux\\351, cours de la Lib\'eration\\33405 Talence CEDEX\\FRANCE}

\email{bernhard.haak@math.u-bordeaux.fr}

\subjclass{}

\keywords{} 

\date{\today}

\begin{abstract}
  This note deals with the boundedness of the $\Hinfty$ functional
  calculus of Ritt operators $T$ and associated square function
  estimates. The purpose is to give a shorter, concise and slightly
  more general approach towards Le~Merdy's results \cite{LeMerdy:Ritt2014}
  on this subject.
\end{abstract}

\maketitle

\setcounter{footnote}{1}

\section{Introduction and main result}

Ritt --- or Ritt-Tadmor --- operators and their functional calculus
have attracted some attention in recent years, see for example
\cite{Arhancet:Ritt,ArhancetLeMerdy:Ritt,ArrigoniLeMerdy:Ritt,AssaniHallyburtonMcMahonSchmidtSchoone,GomilkoTomilov,KaltonPortal:Ritt,LancienLeMerdy:Ritt,LeMerdy:Ritt2014,MohantyRay:Ritt,Schwenninger:Ritt,Vitse:Ritt}.
It is well known that the boundedness of the $\Hinfty$ functional
calculus of sectorial or strip-type operators is linked to certain
square function estimates, see for example
\cite{KaltonWeis:square-function-est,LeMerdy:square-functions,McIntosh:H-infty-calc},
as well as \cite{Haase:Buch,FabulousFour:Band2} for extensive
references. A 'discrete' analogue to such results for Ritt operators
is stated next.

\begin{theorem}[Le~Merdy]\label{thm:LeMerdy}
  Let $1<p< \infty$ and $T: L^p(\Omega)\to L^p(\Omega)$ be a Ritt
  operator. Then the following assertions are equivalent.
\begin{aufzi}
\item The operator $T$ admits a bounded
  $\Hinfty(B_\gamma)$-functional calculus for some
  $\gamma \in( 0, \pi/2)$ where $B_\gamma$ is a certain Stolz type
  domain within the complex unit circle (see figure~\ref{fig:2Stolz} below).
\item The operator $T$ as well as its adjoint
  $T^*: L^q(\Omega) \to L^q(\Omega)$ both satisfy uniform estimates
  \begin{align*}
               & \quad \lrnorm{ \Bigl(\sum_{k=1}^\infty k \bigl| (\Id-T)T^{k-1} f |^2 \Bigr)^\einhalb }_{L^p} \lesssim  \norm{f}_{L^p} \\
    \text{and} & \quad \lrnorm{ \Bigl(\sum_{k=1}^\infty k \bigl| (\Id-T^*)(T^*)^{k-1} g |^2 \Bigr)^\einhalb }_{L^q} \lesssim  \norm{g}_{L^q}
  \end{align*}
\end{aufzi}
\end{theorem}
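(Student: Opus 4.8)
The plan is to prove the equivalence by passing through the known sectorial theory via the Cayley-type transform $\lambda \mapsto 1-\lambda$, which turns a Ritt operator $T$ into $A := \Id - T$, a sectorial operator of angle less than $\pi/2$ whose resolvent satisfies the Ritt-type estimate $\norm{\lambda(\lambda-A)^{-1}}\lesssim 1$ on a neighbourhood of the sector. The functional calculus on the Stolz domain $B_\gamma$ corresponds, under this transform, to the $\Hinfty$ calculus of $A$ on a sector $S(\omega)$ for suitable $\omega \in (\gamma, \pi/2)$, together with an additional polynomial-type control at the vertex reflecting that $B_\gamma$ is a genuine Stolz domain and not just a sector. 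So the first step is to make this dictionary precise: record that $A = \Id - T$ is sectorial, that $0$ is not in the spectrum in a relevant sense (power boundedness of $T$ gives analyticity of the semigroup generated by $-A$), and that bounded $\Hinfty(B_\gamma)$ calculus for $T$ is equivalent to bounded $\Hinfty(S(\gamma'))$ calculus for $A$.

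Next I would reduce the square function estimates to their continuous analogues. The discrete square function $\bigl\| (\sum_k k |(\Id-T)T^{k-1}f|^2)^{1/2}\bigr\|_{L^p}$ should be comparable, for a Ritt operator, to a continuous square function of the analytic semigroup $e^{-tA} = e^{-t(\Id-T)}$, namely $\bigl\|(\int_0^\infty |tAe^{-tA}f|^2 \tfrac{\ud t}{t})^{1/2}\bigr\|_{L^p}$, or equivalently to $\bigl\|(\int_0^\infty |t^{1/2}(\Id-T)T^{[t]} f|^2 \ud t)^{1/2}\bigr\|_{L^p}$ after a change of variables. This comparison is where the Ritt estimate $\norm{k(\Id-T)T^{k-1}}\lesssim 1$ is used crucially, to pass between the sum over integers $k$ and the integral over $t$, summing the differences $T^k - T^{k+1}$ with Abel summation and controlling the error terms. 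Once the discrete square functions for $T$ and $T^*$ are identified with continuous square functions for $A = \Id-T$ and $A^* = \Id-T^*$, I would invoke the classical theory — McIntosh's theorem and the Kalton-Weis / Le Merdy characterisation — which states precisely that, on $L^p$ with $1<p<\infty$, a sectorial operator of angle $<\pi/2$ has bounded $\Hinfty$ functional calculus if and only if both it and its adjoint satisfy such square function estimates for one (hence every) non-degenerate auxiliary function.

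The main obstacle I expect is the equivalence of the \emph{discrete} square functions for $T$ with the \emph{continuous} ones for $\Id - T$ in the $L^p$ setting, and in particular getting this to work in both directions uniformly. One direction — continuous controls discrete, or vice versa — is usually straightforward from the Ritt resolvent bounds and Cauchy's integral formula applied to the holomorphic family $z \mapsto z^k(1-z)$ or to $w \mapsto e^{-tw}$ on the Stolz domain; the reverse inequality is more delicate because one must reconstruct $tAe^{-tA}$ from the discrete data $\{(\Id-T)T^{k-1}\}_k$ without losing the $L^p$-valued square function norm, which typically requires a Schur-type estimate on the kernel relating $e^{-tA}$ to the powers $T^k$, uniformly in $t$. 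A secondary subtlety is that $B_\gamma$ is a Stolz domain with a cusp at $1$, not merely a sector, so when transferring the functional calculus one must check that the extra decay of $\Hinfty(B_\gamma)$ functions near the vertex (as opposed to mere boundedness on a sector) is automatically available for the functions $z \mapsto z^k(1-z)$ and does not obstruct the convergence of the relevant Cauchy integrals; this is essentially the reason the Stolz domain is the correct object, and verifying it cleanly is part of what makes the argument shorter than a direct approach.
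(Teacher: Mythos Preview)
Your proposal is essentially correct and in fact recapitulates Le~Merdy's original argument: pass to the sectorial operator $A = \Id - T$, identify bounded $\Hinfty$ calculus on the Stolz domain for $T$ with bounded $\Hinfty$ calculus on a sector for $A$, relate the discrete square functions of $T$ to the continuous McIntosh square functions of $A$, and then invoke the classical sectorial theory. The technical obstacles you flag (the discrete--continuous comparison and the cusp of the Stolz domain) are real and are precisely what Le~Merdy has to work through.

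The present paper, however, takes a deliberately different route and explicitly \emph{avoids} this sectorial detour, staying in the ``Ritt world'' throughout. For (a)~$\Rightarrow$~(b), instead of comparing with continuous square functions, it views the square function as a vector-valued functional calculus $x \mapsto \sprod{F_m(\cdot)}{h}(T)x$ with $F_m(z) = (k^{m-\einhalb}(1-z)^m z^{k-1})_k \in \ell_2$, and then constructs by hand a Riesz basis $(b_n)$ of $\ell_2$ (built from blocks involving roots of unity) for which the $\ell_1$-norms of $(\sprod{F_m(z)}{b_n})_n$ are uniformly bounded on Stolz domains; this yields the square function estimate directly from the scalar $\Hinfty$ bound. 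For (b)~$\Rightarrow$~(a), the paper first shows that on a space with property~$(\alpha)$ the square function estimates alone force $T$ to be $\eR$-Ritt (so $\eR$-Ritt becomes a conclusion rather than a hypothesis, which is a genuine strengthening over Le~Merdy's statement), and then pushes $f(T)$ through the square function via an explicit integral representation based on Lemma~\ref{lem:stolz-geometric-series-lemma}. Your approach has the virtue of reducing everything to off-the-shelf sectorial machinery; the paper's approach buys a sharper domain ($\stolz{\omega}$ rather than $B_\gamma$), removes the a~priori $\eR$-Ritt assumption, and transfers cleanly to general Banach spaces with property~$(\alpha)$ without rebuilding the discrete/continuous dictionary.
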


We will formulate Le~Merdy's result for Banach spaces having certain
geometric conditions (see section~\ref{sec:geometry} for precise definitions).
The basic idea is to replace the square sum
\begin{equation}
  \label{eq:Lp-sqf}
   \lrnorm{ \Bigl(\sum_{k=1}^\infty  \bigl| f_k |^2 \Bigr)^\einhalb }_{L^p}
\end{equation}
by a random sum of the type
\[
  \EE \lrnorm{ \sum_{k=1}^\infty  r_k f_k  }_{X}
  \qquad\text{or}\qquad
  \EE \lrnorm{ \sum_{k=1}^\infty  \gamma_k f_k  }_{X}
\]
where the variables $r_k$ (respectively, $\gamma_k$) refer to a
sequence of independent, identically distributed Rademacher
(respectively standard Gaussian) random variables. In the case that
$X = L^p(\Omega)$, and, more generally, the case of Banach spaces of
finite cotype, these two random sums are comparable in the sense of a
double inequality. Moreover, both expressions are equivalent to
\eqref{eq:Lp-sqf} if $X=L^p(\Omega)$ with $1<p<\infty$.

\medskip

The proposed extension in this article can be split up in several
specific sub-statements under different hypotheses on the operators
and the Banach space geometry, and are the subject of  sections
\ref{sec:Hinfty-to-sqf}--\ref{sec:form-sqf-to-hinfty}.  A framework
that contains all of these results, and allows us to compare our
findings with Le~Merdy's Theorem~\ref{thm:LeMerdy} is the case of a
Banach space enjoying Pisier's property $(\alpha)$, as defined
below. Our main result can now be phrased as follows:

\begin{theorem}\label{thm:main}
  Let $X$ be a Banach space that has Pisier's property $(\alpha)$, and
  $T\in \BOUNDED(X)$ be a bounded operator on $X$, such that $(I-T)$
  is injective and has dense range. Then the following assertions are
  equivalent.
\begin{aufzi}
\item The operator $T$ admits a bounded
  $\Hinfty(\stolz{\omega})$-functional calculus for some
  $\omega \in( 0, \pi/2)$ where $\stolz{\omega}$ is a Stolz type domain, see
  figure~\ref{fig:2Stolz} below.
\item For some (and hence all) $m_1, m_2 \ge 1$, the operator $T$ and its adjoint
  $T'$  both satisfy  uniform estimates
  \begin{align*}
               & \quad \EE \Bignorm{ \sum \gamma_k  k^{m_1-\einhalb} (\Id-T)^{m_1} T^{k-1} x }_{X}^2    \le   C_{m_1} \norm{x}\\
    \text{and} & \quad \EE \Bignorm{ \sum \gamma_k  k^{m_2-\einhalb} (\Id-T')^{m_2} (T')^{k-1} x' }_{X'}^2 \le C_{m_2} \norm{x'}_{X'}.
  \end{align*}
\end{aufzi}
Moreover, $T$ is an $\eR$-Ritt and hence $\gamma$-Ritt operator in this case.
\end{theorem}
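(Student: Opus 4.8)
The plan is to obtain the equivalence (a)$\Leftrightarrow$(b) by assembling the one-directional implications that Sections~\ref{sec:Hinfty-to-sqf}--\ref{sec:form-sqf-to-hinfty} establish under weaker hypotheses, and specialising them to a Banach space with property~$(\alpha)$. Property~$(\alpha)$ is precisely the setting in which all the loose ends are tied: Rademacher and Gaussian square functions then coincide up to constants; double random sums obey a Fubini-type interchange; property~$(\alpha)$ passes to $X'$, so that whatever holds for $T$ on $X$ is available for $T'$ on $X'$; and, since property~$(\alpha)$ forces finite cotype (hence $X\not\supseteq c_0$), the Gaussian series occurring in (b) converge and the statements are well posed. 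Granting this, it suffices to show that (a) implies (b) for every $m_1,m_2\ge1$, and that (b) for one choice of $m_1,m_2$ implies (a).

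For (a)$\Rightarrow$(b) I would test the functional calculus against the symbols
\[
  f_{k,m}(z):=k^{m-\einhalb}(1-z)^m z^{k-1}\qquad(k\ge1),
\]
so that $f_{k,m}(T)x=k^{m-\einhalb}(\Id-T)^m T^{k-1}x$ is precisely the summand in (b). The key observation is that $\{f_{k,m}\}_{k\ge1}$ is uniformly \emph{square-bounded} on $\stolz{\om}$: since $\sum_{k\ge1}k^{2m-1}t^{k-1}\asymp(1-t)^{-2m}$ while $|1-z|\le c_\om(1-|z|)$ on $\stolz{\om}$, one obtains $\sum_{k\ge1}|f_{k,m}(z)|^2\le C_m\,c_\om^{2m}$ there. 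On a space with property~$(\alpha)$ a bounded $\Hinfty(\stolz{\om})$-calculus controls the $\gamma$-square function of any square-bounded family of symbols --- $\EE\bignorm{\sum_k\gamma_k\psi_k(T)x}^2\lesssim\norm{x}^2$ whenever $\sup_{z\in\stolz{\om}}\sum_k|\psi_k(z)|^2<\infty$, one of the results of Section~\ref{sec:Hinfty-to-sqf} --- and applied to $\{f_{k,m}\}_k$ this is exactly the first inequality of (b), for every $m_1\ge1$. Transposing the Cauchy integral that defines the calculus of $T$ gives a bounded $\Hinfty(\stolz{\om})$-calculus of $T'$ on $X'$ with the same bound; since $X'$ again has property~$(\alpha)$, the same argument delivers the second inequality for every $m_2\ge1$.

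For (b)$\Rightarrow$(a) the route is the classical McIntosh ``reproducing formula plus sandwich'' argument in its $\gamma$-valued form. One first self-improves the single estimate of (b): writing the symbols of an arbitrary square-bounded family in the form $\sum_j\Phi_{kj}f_{j,m_1}$ with the coefficient matrix $(\Phi_{kj})$ bounded on $\ell^2$ (a Schur test on the discrete Calder\'on kernel), and using that $\ell^2$-bounded matrices act boundedly on Gaussian sums, one transfers the bound of (b) to \emph{every} square-bounded family --- for $T$ and, likewise, for $T'$, and for every $m$. Then, fixing a discrete reproducing decomposition $\sum_k\psi_k^{(1)}\psi_k^{(2)}\equiv1$ adapted to the families in (b), one expands (for $\phi\in\Hinfty(\stolz{\om})$ in a primary class) $\phi(T)=\sum_k\psi_k^{(1)}(T)\,(\psi_k^{(2)}\phi)(T)$, pairs with $x'$, and applies $\gamma$-Cauchy--Schwarz:
\[
  |\dual{\phi(T)x}{x'}|\le\bignorm{((\psi_k^{(2)}\phi)(T)x)_k}_{\gamma(X)}\,\bignorm{(\psi_k^{(1)}(T')x')_k}_{\gamma(X')}\lesssim\norm{\phi}_\infty\norm{x}\,\norm{x'},
\]
the first factor being controlled by the self-improved estimate for $T$ (the family $\{\psi_k^{(2)}\phi\}_k$ is square-bounded by $\norm{\phi}_\infty^2$ times a constant), the second by that for $T'$. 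A density/approximation step then extends the bound to all $\phi\in\Hinfty(\stolz{\om})$, yielding a bounded $\Hinfty(\stolz{\om})$-calculus for a suitable $\om\in(0,\pi/2)$, which is (a). I expect the self-improvement step --- the discrete Calder\'on identity, the Schur-type bookkeeping, and the verification that the reproducing series converges (here one uses that (b) already forces $T$ to be power-bounded, indeed Ritt) --- to be the main technical obstacle; this is the content of Section~\ref{sec:form-sqf-to-hinfty}.

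Finally, for the ``moreover'': once (a) holds, the symbols $z\mapsto z^k$ and $z\mapsto k(1-z)z^{k-1}$ are bounded in $\Hinfty(\stolz{\om})$ uniformly in $k$, since $|z^k|\le1$ and $|k(1-z)z^{k-1}|\le c_\om\,k(1-|z|)|z|^{k-1}\le c_\om$ on $\stolz{\om}$ (using $\sup_{0\le r\le1}k(1-r)r^{k-1}\le1$). Feeding these into the calculus gives $\sup_k\norm{T^k}<\infty$ and $\sup_k\norm{k(\Id-T)T^{k-1}}<\infty$, which by the Nagy--Zem\'anek and Lyubich characterisation of Ritt operators means $T$ is a Ritt operator. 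Moreover, on a property-$(\alpha)$ space a bounded $\Hinfty$-calculus is automatically $\eR$-bounded --- the Ritt analogue of the Kalton--Weis theorem, on a possibly slightly larger Stolz domain --- so the families $\{T^k\}_{k\ge0}$ and $\{k(\Id-T)T^{k-1}\}_{k\ge1}$ are in fact $\eR$-bounded; that is, $T$ is an $\eR$-Ritt operator. Since $\eR$-boundedness always implies $\gamma$-boundedness, $T$ is then also a $\gamma$-Ritt operator.
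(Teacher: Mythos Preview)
Your overall scaffold (assemble Sections~\ref{sec:Hinfty-to-sqf}--\ref{sec:form-sqf-to-hinfty} and specialise to property~$(\alpha)$) is the paper's, but the execution of (b)$\Rightarrow$(a) diverges at the decisive point and contains a gap. You propose to \emph{self-improve} the single square function of (b) to arbitrary square-bounded families by writing each such family as $g_k=\sum_j\Phi_{kj}f_{j,m_1}$ with a fixed $\ell^2$-bounded matrix~$\Phi$. No such decomposition is supplied, and the paper's ``Open question'' section flags precisely the absence of constant-coefficient representations of $\Hinfty$ symbols in terms of the~$\varphi_m$ as unresolved for Ritt operators. Section~\ref{sec:form-sqf-to-hinfty}, to which you defer, does \emph{not} carry out your self-improvement. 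The paper instead first extracts the $\eR$-Ritt property from (b) alone (Theorem~\ref{thm:producing-R-Ritt}, a substantial step using lower square-function estimates and property~$(\alpha)$), and only then runs the sandwich in Theorem~\ref{thm:R-Ritt-and-sqf-yield-Hinfty} with the reproducing formula~\eqref{eq:repres-formula}, whose multipliers $m_k(z)\sim f(z)\,k(1{-}z)z^{k-1}$ are \emph{$z$-dependent}. The $\eR$-Ritt property is exactly what makes $\{m_k(T)\}_k$ $\eR$-bounded (contour integral plus the convex-hull lemma), and this is how $\norm{f}_\infty$ enters the estimate. By bypassing the $\eR$-Ritt step you lose the mechanism that controls~$f$; in the paper's logic the $\eR$-Ritt property is an \emph{input} to (b)$\Rightarrow$(a), not merely a corollary of~(a) as in your ``moreover''.

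For (a)$\Rightarrow$(b) your route is valid but is not what Section~\ref{sec:Hinfty-to-sqf} does. You invoke the quadratic calculus~\eqref{eq:sqf-calculus} on property-$(\alpha)$ spaces; the paper mentions this only as Le~Merdy's result via Francks--McIntosh. Proposition~\ref{prop:ritt-calc-implies-LeM-sqf} instead works under the weaker hypothesis of finite cotype and proves the specific $\Phi_m$ directly, by replacing the canonical basis of $\ell_2$ with a tailored Riesz basis $(b_n)$ built from roots of unity so that $\bigl(\sprod{F_m(z)}{b_n}\bigr)_n$ becomes $\ell^1$-summable uniformly on Stolz domains; feeding this into~\eqref{eq:Hinfty-to-sqf} gives the square function. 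The dual estimate is obtained in the trace-dual norm $\gamma(\ell_2;X)'$ rather than in $\gamma(\ell_2;X')$, so no appeal to ``property~$(\alpha)$ passes to $X'$'' is needed.
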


Notice that if $T$ is weakly compact, the mean ergodic theorem
\cite{Yosida:ergodic1938} shows that $X$ decomposes as
\[
  X = \ker(\Id-T) \oplus \overline{ \text{Im}(\Id-T) } =: X_0 \oplus X_1.
\]
One can then define $f(T) := f(1)\Id \oplus f(T|_{X_1})$ and observe
that $(\Id-T|_{X_1})$ is injective with dense range. This allows to
remove the hypothesis of injectivity and dense range of the operator
$\Id{-}T$ in Theorem~\ref{thm:main} at the expense of requiring that
$X$ is a reflexive Banach space.

\medskip

The fact that Theorem~\ref{thm:LeMerdy} can be formulated in a context
of Banach spaces with property $(\alpha)$ already appears in an
unnumbered remark in \cite[end of section~7]{LeMerdy:Ritt2014}. The
emphasis of our approach is therefore on different proofs that yield a
slightly more general result. As an example, in contrast with
Theorem~\ref{thm:LeMerdy}, in our Theorem~\ref{thm:main} the
$\eR$-Ritt property is no longer a hypothesis, but a conclusion.
Observe that $\eR$-boundedness or $\gamma$-boundedness, (and hence
the $R /\gamma$--Ritt property) is usually hard to check in concrete
examples: we consider our version an important improvement for this
reason.

Our approach also allows a bit more freedom in the choice of the
square function, and a sharper domain of the $\Hinfty$-functions (see
below for a discussion of different domain types in the literature).
Moreover, our proofs rely on the same structural arguments that we use
in \cite{HaakHaase} to explain the link between $\Hinfty$-calculus
and (dual) square function estimates for sectorial or strip-type
operators. Finally, our approach avoids the 'detour' to the functional
calculus of the sectorial operator $A = \Id{-}T$ that Le~Merdy
\cite{LeMerdy:Ritt2014} uses at some steps in his proofs. Instead, we
constantly stay in the ``Ritt world''.

\medskip

Acknowledgement: the author would like to thank Markus Haase for many
helpful discussions on functional calculi and square functions
estimates.

\section{Basic definitions from the geometry of Banach spaces}\label{sec:geometry}
We recall some standard terminology from geometry of Banach spaces, cf.
\cite[Chapter~11]{DiestelJarchowTonge} or \cite[Chapter~7]{FabulousFour:Band2}.

A Banach space $X$ is said to have Rademacher type $p \in [1,2]$,
if there exists a constant $t_p(X)$ such that, for
all $N\ge 1$ and all $x_1, \ldots, x_N \in X$
\[
  \EE \Bignorm{ \sum_{n=1}^N r_n x_n } \le t_p(X) \Bigl( \sum_{n=1}^N \norm{x_n}^p \Bigr)^{\nicefrac{1}{p} }
\]
All Banach spaces have type $p=1$. We say the the type is non-trivial
if $p>1$.  Similarly, $X$ has cotype $q \ge 2$ if there exists a
constant $c_q(X)$ such that, for all $N\ge 1$ and all
$x_1, \ldots, x_N \in X$
\[
  \Bigl( \sum_{n=1}^N \norm{x_n}^q \Bigr)^{\nicefrac{1}{q} }   \le c_q(X) \EE \Bignorm{ \sum_{n=1}^N r_n x_n }
\]
with the obvious modification if $q={+}\infty$. All Banach spaces have
cotype $q={+}\infty$. There are
two reasons to consider such spaces in the following.

First, since spaces like $c_0, \ell_\infty$ have trivial cotype, no
Banach space with finite cotype can contain an isomorphic copy of such
spaces. The Kwapién -- Hoffman$\,$Jørgensen theorem
\cite{Hoffmann-Jorgensen,Kwapien:c_0} ensures therefore that on Banach
spaces with non-trivial cotype, uniform boundedness (say, in
$L^2\Omega; X)$-norm), and convergence of sequence of random sums
\[
  S_N = \sum_{n=1}^N r_n x_n
\]
are equivalent. Second, on Banach spaces with finite cotype, Gaussian
and Rademacher random sums
\[
  \EE \Bignorm{ r_n x_n }_X \qquad\text{and}\qquad
  \EE \Bignorm{ \gamma_n x_n }_X
\]
are equivalent with constants depending only on the cotype constant,
see for example \cite[Theorem~8.1.3]{FabulousFour:Band2}.

\bigskip

We remind the reader of Kahane's contraction principle: for any Banach
space, any $N\ge 1$ and $x_1, \ldots x_N \in X$ and any sequence
$(\alpha_n)_{n\ge 1}$ in $\RR^\NN$,
\begin{equation}   \label{eq:contraction-principle}
  \EE \Bignorm{ \sum_{n=1}^N \alpha_n r_n x_n }
  \le
  \max\{ |\alpha_n|: 1 \le n \le N\} \; \EE \Bignorm{ \sum_{n=1}^N  r_n x_n }.
\end{equation}
If a similar ``contraction'' holds for a collection $\calT$ of
operators, instead of scalars, i.e. if $\calT \subset \BOUNDED(X; Y)$ is such that
any $N\ge 1$ and $x_1, \ldots x_N \in X$ and any  $T_1, \ldots, T_N \in \calT$
\[
  \EE \Bignorm{ \sum_{n=1}^N  r_n T_n x_n }_Y
  \le
  C \; \EE \Bignorm{ \sum_{n=1}^N  r_n x_n }_X.
\]
we say that $\calT$ is $\eR$-bounded. Letting $N{=}1$ shows that
$\eR$-bounded sets are bounded in $\BOUNDED(X)$, but the converse is
wrong, in general.  The corresponding expression with Gaussians
instead of Rademachers leads to the notion of
$\gamma$-boundedness. Rademacher- or $\eR$-boundedness implies
$\gamma$-boundedness, but in spaces of finite cotype, both notions
coincide. We need at some places that
\begin{equation}  \label{eq:convex-hull}
  \calT \quad \eR\text{--bounded} \qquad \Rightarrow \qquad  \overline{ \text{absco}(\calT) } \quad \eR\text{--bounded},
\end{equation}
a statement that is commonly referred to as ``convex hull lemma'', see for
example \cite{ClementDePagterSukochevWitvliet,FabulousFour:Band2}.

\bigskip

Finally, we recall the definition of Pisier's property $(\alpha)$. A
Banach space is said to enjoy this property, if double-indexed random
sums admit a 'contraction principle':
\begin{align*}
   &  \; \widetilde{\EE} \,\EE\; \Bignorm{ \sum_{n\le N, k\le K}  \alpha_{n,k}  \widetilde{r_n} r_k  x_{n,k} }\\
  \le & \; 
  \max\bigl\{ |\alpha_{n,k}|: 1\le n \le N, \quad 1 \le k \le L\bigr\} \;
  \widetilde{\EE} \,\EE \; \Bignorm{ \sum_{n\le N, k\le K}    \widetilde{r_n} r_k  x_{n,k} }.
\end{align*}
As an example, this will be the case for all separable $L^p$-spaces,  Besov spaces or
Sobolev spaces.  Property $(\alpha)$ implies finite
cotype, but not necessarily non-trivial type of the Banach space
$X$ (think of $\ell_1$). We refer to \cite[Section~7.5]{FabulousFour:Band2}.

\section{Basic definitions of Ritt operators and their functional calculus}
\label{sec:Ritt-properties}

\smallskip

\noindent Ritt operators  are bounded operators on a Banach space
$X$, whose spectrum lies in the closed unit disc and that satisfy 
Ritt's condition \cite{Ritt1953}
\begin{equation}   \label{eq:Ritt}
  \exists K{>}0\quad \forall |\lambda|>1: \qquad  \norm{ (\lambda-1)R(\lambda, T) }\le K
\end{equation}
We say that $T$ is $\eR$-Ritt (respectively $\gamma$-Ritt), if the set
\begin{equation}   \label{eq:R-Ritt}
 \{  (\lambda-1)R(\lambda, T): \quad |\lambda|>1  \}
\end{equation}
is $\eR$-bounded (respectively $\gamma$-bounded).

\begin{figure}
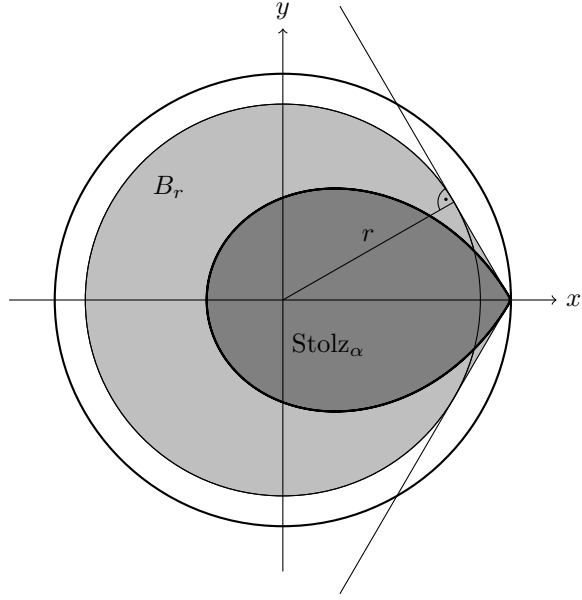

  \centering
  \StolzWinkel
  \caption{Two definitions of Stolz domains in the literature: as an
    example, for $\alpha=2$, the region $B_r$ in light grey, and the
    corresponding smaller Stolz region $\stolz{\alpha}$ in dark
    grey. Precise definitions of both regions will be given in section~\ref{sec:Ritt-properties} below.}
  \label{fig:2Stolz}
\end{figure}

\bigskip

When we talk about the $\Hinfty$-calculus of an operator, we need to
specify the domain of holomorphic functions. Functional calculi on
smaller domains are 'better', since a larger set of functions is
admissible.  By the classical Dunford-Riesz calculus, any bounded
operator operator admits a bounded functional calculus on a
sufficiently large domain, i.e. there exists a bounded algebra
homomorphism
\[
  \Phi: \left \{
    \begin{array}{lcl}
      \Hinfty(\calO) & \to & \BOUNDED(X) \\
      f & \mapsto & f(T) := \tfrac1{2\pi i} \displaystyle\int\limits_\Gamma f(z)\, R(z, T)\,dz
    \end{array}\right.
\]
for all complex domains $\calO$ that contain the spectrum of $T$.  The
point of Ritt operators is that the resolvent condition
\eqref{eq:Ritt} implies further spectral properties that allow to
narrow down the domain $\calO$ (initially a neighbourhood
$\overline{\DD}$) to smaller domains that are subsets of
$\DD \cup \{1\}$ where $\DD = \{ z \in \CC:\; |z|<1\}$.

\medskip

Indeed, evaluating condition \eqref{eq:Ritt} on the line
$\{ \Re(\lambda)=1 \}$ easily shows that the spectrum of $T$ is contained in
a symmetric sector around the real axis of angle $<\pihalbe$, centred
at $z=1$ and open to the left.  Moreover, the uniform boundedness of
the resolvent on
$\overline{\DD}^\complement \cap \{ Re(z) < 1 - \varepsilon\}$ shows
that the spectrum $\sigma(T)$ satisfies also
\[
  \sigma(T)
  \cap \{ \Re(z) < 1{-}\varepsilon\}  \quad \subseteq \quad B(0, R) \cap  \{ \Re(z) < 1{-}\varepsilon\}
\]
for some radius $R \in (0,1)$. Intersecting both spectral information,
one obtains for a suitable $r\in (0,1)$, necessarily
$\sigma(T) \subseteq B_r$, where $B_r$ is convex hull of the singleton
$z{=}1$ and the ball $B(0, r)$, see figure~\ref{fig:2Stolz}.  A more
refined analysis \cite[Proposition~4.2]{GomilkoTomilov} has shown that
the spectrum of a Ritt operator lies in a much smaller subset, that we
will call a {\em Stolz domain}: a set of the form
\begin{equation}
  \label{eq:stolz-def}
   \stolz{\omega} = \left\{ z \in \DD: \quad \frac{|1-z|}{1-|z|} < \omega \right\}
\end{equation}
for some $\omega>1$. We call $\omega$ the Stolz type of the
domain. It's opening angle is $2\arccos(\tfrac1\omega)$.

\medskip

\noindent We mention that a bounded operator $T$ on a Banach space is a {Ritt
  operator} if, and only the following two conditions hold simultaneously:
\begin{equation}
  \label{eq:powerbdd} \tag{\sc pb}
  \exists C_1 >0: \forall n\ge 0: \qquad  \norm{ T^n } \le C_1
\end{equation}
and
\begin{equation}
  \label{eq:discrete-derivative} \tag{\sc dd}
  \exists C_2 >0: \forall n\ge 0: \qquad  \norm{ k T^{k-1}(\Id - T)} \le C_2
\end{equation}
The first condition is {\em power-boundedness}, the second the {\em
  discrete derivative condition}. While the first one corresponds to
the boundedness of a $C_0$-semigroup, the latter corresponds to its
analyticity in a sector. None of the two conditions can be omitted
to characterise Ritt operators. Indeed, a simple 2-dimensional rotation
satisfies \eqref{eq:powerbdd} but not \eqref{eq:discrete-derivative},
whereas a counterexample to the opposite implication is presented in
\cite{KaltonMontgomerySmithOleszkiewiczTomilov}.
Similarly, $\eR$-Ritt means that the set
\[
  \{ T^n: \quad n\ge 1 \} \cup \{  k T^{k-1}(\Id - T): \quad k \ge 1\}
\]
is $\eR$-bounded in $\BOUNDED(X)$, see \cite[Lemma 5.2]{LeMerdy:Ritt2014}.

\bigskip

\begin{figure}
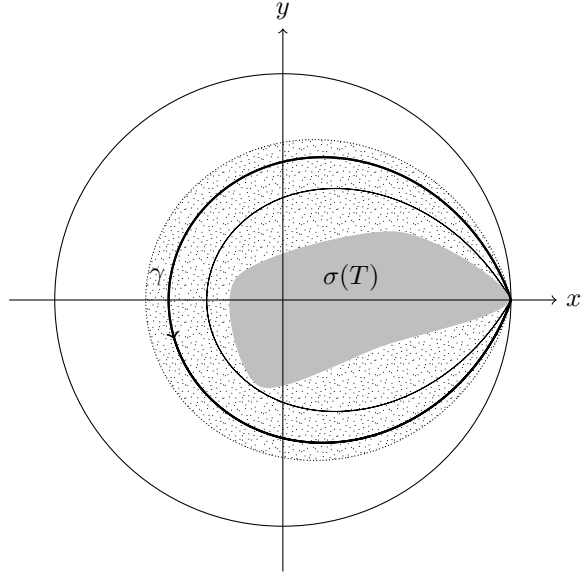

  \centering
    \RittKalkuel
  \caption{Spectrum and integration path for the elementary functional calculus of Ritt operators}
  \label{fig:Ritt-calc}
\end{figure}
\noindent Given a Ritt operator $T$ whose spectrum is contained in a
Stolz domain of type $\omega$ and given $\nu > \omega$ we let
\[
  \calE_{\omega, \nu} := \left\{ \varphi \in \Hinfty( \stolz{\nu}): \; \exists  \vartheta:\quad \omega < \vartheta < \nu\;\;\text{such that}\;\;  \tfrac{ \varphi(z) }{1-z} \in  L^1( \partial\stolz{\vartheta}) \right\}
\]
 For $f \in \calE_{\omega, \nu}$
we have the absolutely convergent integral
\begin{equation}  \label{eq:Ritt-elem-calc}
  \Phi(f) := f(T) := \tfrac1{2\pi i} \int_{\partial \stolz{\vartheta}} f(z) \, R(z, T)\,dz.
\end{equation}
that defines a bounded operator, see also figure~\ref{fig:Ritt-calc}.
We call the map $\Phi: \calE_{\omega, \nu} \to \BOUNDED(X)$ the \emph{
  elementary functional calculus} for $T$. We extend this by an
abstract approach outlayed in \cite{HaakHaase}: suppose that every
$f \in \Hinfty(\stolz{\nu})$ admits an elementary function
$e \in \calE_{\omega, \nu}$ such that $e(T)$ is injective and
$ef \in \calE_{\omega, \nu}$. We say that $e$ regularises $f$. We can
then define
\begin{equation}  \label{eq:abstr-fc-extension}
  \Phi(f) := f(T) := e(T)^{-1}  (ef)(T)
\end{equation}
and extend the functional calculus from $\calE_{\omega, \nu}$ to
$\Hinfty(\stolz{\nu})$: the operator $f(T) := \Phi(f)$ will be at
least a densely defined, closed operator that may, or may not, be
bounded: if so, we say that $T$ has a bounded
$\Hinfty(\stolz{\nu})$-functional calculus.

\smallskip

Since we will deal with upper, but also ``lower'' square function estimates,
i.e. estimates of the form
\[
  \EE \Bignorm{ \sum \gamma_k  k^{m_1-\einhalb} (\Id-T)^{m_1} T^{k-1} x }_{X}^2    \quad \ge \quad  c  \norm{x}
\]
for $T$ as well as its adjoint, we see that necessarily, $(\Id{-}T)$
is injective has dense range. This shows that such hypothesis are
natural in the given context, in particular in Theorem~\ref{thm:main}.
Here is another useful consequence: the Caley transform function
\[
  e(z) :=  \frac{1-z}{1+z}
\]
provides a nice elementary function that regularises all
$f \in \Hinfty$ simultaneously. Consequently, the construction given in 
\eqref{eq:abstr-fc-extension} above works well if $(\Id{-}T)$ is injective.

\medskip

Finally, usual functional calculus constructions require a 'weak'
control on the behaviour on approximating sequences : assume that the
uniformly bounded sequence $(f_n)$ converges pointwise to $f$ in
$\Hinfty$.  Then we need
\begin{equation}  \label{eq:bp-conv-requirement}
  \Phi(f_n)x \to \Phi(f)x
\end{equation}
weakly, for a dense set of points $x \in X$. Limiting ourselves to
points of the form $x=e(T)y$ for $y \in X$, and assume injectivity and
dense range of $(\Id{-}T)$ provides such a dense set, on which even
strong convergence follows easily from the dominated convergence
theorem.

\bigskip

Notice that the equivalent Ritt condition \eqref{eq:powerbdd} and
\eqref{eq:discrete-derivative} can be reformulated in terms bounded
operators acting form $X$ to $\ell_\infty(X)$ by
\[
  x \mapsto ( T^n x)_{n\ge 0} \qquad\text{and}\qquad x \mapsto (n
  (\Id-T)T^{n-1}x)_{n\ge 1}
\]
respectively. When we talk about square function characterisations of
the boundedness of the $\Hinfty$-calculus, this is done in a similar
way, but using different sequence norms. Given a separable Hilbert
space $H$ and a Banach space $X$, we say that a (bounded) operator
$\Phi: H \to X$ is $\gamma$-radonifying, if the (Gaussian) random
series
\[
  \sum_{n=1}^\infty  \gamma_n\, \Phi(h_n)
\]
converges in $L^2(\Omega; X)$, where $(h_n)$ is some orthonormal
basis. We refer to \cite{FabulousFour:Band2,Neerven:Survey} for more
details on the ideal of $\gamma$-radonifying operators.  In the
situation we consider in this note, we shall always let $H := \ell_2$.
Then $X$-valued sequences can then be identified with operators, by
defining $\Phi(e_n) := x_n$ on some orthonormal basis $(e_n)$ of
$\ell_2$. The $\gamma$-norm of a sequence $(x_n)$ is then defined as
the $\gamma$-norm of the corresponding operator $\Phi$:
\[
 \bignorm{ (x_n)_{n\ge 1} }_{\gamma(\ell_2; X)} :=  \left( \EE \Bignorm{\sum_n \gamma_n\, x_n }_X^2 \right)^\einhalb.
\]
In the case that $X$ is a Hilbert space, this is just the
$\ell_2(X)$-norm.

\bigskip

We say that our Ritt operator $T$ admits a square function estimate
$\Phi_m$, if  operator
\begin{equation}
  \label{eq:LeM-sqf}
  \Phi_m: \left\{
      \begin{array}{lcl}
    X   &\to&   \gamma(\ell_2; X) \\
    x    &\mapsto &  \bigl( k^{m-\einhalb} T^{k-1}(\Id-T)^m x \bigr)_k
      \end{array}\right.
  \end{equation}
  is bounded.   Along with square functions we consider ``dual'' square functions
  estimates.  These are formally bounded operators acting
\begin{equation}
    \label{eq:LeM-dual-sqf}
     \Phi_m^*: \left\{
      \begin{array}{lcl}
    X'   &\to&   \gamma(\ell_2; X)' \\
    x'    &\mapsto &  \bigl( k^{m-\einhalb} (T')^{k-1}(\Id-T')^m x' \bigr)_k
      \end{array}\right.    
  \end{equation}
  where $\gamma(\ell_2; X)'$ is the dual space of $\gamma(\ell_2; X)$,
  defined by trace duality, see for example
  \cite{FabulousFour:Band2,KaltonWeis:square-function-est,Neerven:Survey}:
  \begin{equation}    \label{eq:trace-duality-norm}
    \bignorm{ (x_n')_{n\ge 1} }_{\gamma(\ell_2; X)'} := \sup\left\{ \sum_n \dprod{x_n'}{x_n}: \quad  \bignorm{ (x_n) }_{\gamma(\ell_2; X)} \le 1 \right\}
  \end{equation}
  If $X$ has nontrivial type, then
  $\gamma(\ell_2; X)' = \gamma(\ell_2; X')$. This will be the case for
  reflexive $L^p$-spaces, reflexive Besov or Sobolev spaces, etc.

  \bigskip
  
  \noindent We will several times in the proofs use a handy
  observation, that appears as well in the proof of
  \cite[Theorem~3.3]{ArhancetLeMerdy:Ritt}.
\begin{lemma}\label{lem:stolz-geometric-series-lemma}
  Let $|u|<1$ and $n \ge 0$. 
  Then
\[
  \sum_{k=1}^\infty  \; \left(\!\!\!\begin{array}{c}k\\n\end{array}\!\!\! \right) \, (1-u)^{n+1} u^{k-1}  = 1
\]
\end{lemma}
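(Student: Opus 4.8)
The plan is to recognise this as a statement about the negative binomial series. Recall the generating function identity
\[
  \sum_{k\ge n} \binom{k}{n} t^{k} = \frac{t^{n}}{(1-t)^{n+1}},
\]
valid for $|t|<1$, which is obtained by differentiating the geometric series $\sum_{j\ge 0} t^{j} = (1-t)^{-1}$ a total of $n$ times and dividing by $n!$ (the terms with $k<n$ vanish since $\binom{k}{n}=0$ there, so the sum may as well run from $k=1$, or even $k=0$). Setting $t=u$ and multiplying both sides by $(1-u)^{n+1}u^{-1}$ — noting that the $k$-th term of the series then becomes $\binom{k}{n}(1-u)^{n+1}u^{k-1}$ — yields exactly
\[
  \sum_{k=1}^{\infty} \binom{k}{n}(1-u)^{n+1}u^{k-1} = (1-u)^{n+1}\cdot u^{-1}\cdot\frac{u^{n}}{(1-u)^{n+1}} = 1,
\]
which is the claim.

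Concretely, I would first record the base case $n=0$: there $\binom{k}{0}=1$ and the sum is $(1-u)\sum_{k\ge 1}u^{k-1} = (1-u)\cdot\frac{1}{1-u}=1$. For general $n$, rather than invoking the generating-function formula as a black box, one can argue by induction on $n$ using Pascal's rule $\binom{k}{n}=\binom{k-1}{n-1}+\binom{k-1}{n}$: splitting the sum accordingly, re-indexing the first piece by $k\mapsto k+1$, and using absolute convergence (guaranteed by $|u|<1$, since $\binom{k}{n}$ grows only polynomially in $k$) to rearrange, one expresses $S_{n}(u):=\sum_{k\ge 1}\binom{k}{n}(1-u)^{n+1}u^{k-1}$ in terms of $S_{n-1}(u)$ and $S_{n}(u)$ and solves for $S_{n}(u)=1$. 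Alternatively, and perhaps most cleanly, one simply differentiates the identity $\sum_{k\ge 0}u^{k}=(1-u)^{-1}$ exactly $n$ times with respect to $u$ on the open disc $|u|<1$ (legitimate term-by-term since power series may be differentiated within their radius of convergence), obtaining $\sum_{k\ge n} k(k-1)\cdots(k-n+1)u^{k-n} = n!\,(1-u)^{-(n+1)}$, and then divides by $n!$ and rearranges.

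I do not expect any genuine obstacle here: the only points requiring a word of care are the justification of term-by-term manipulation of the series (covered by absolute convergence for $|u|<1$, since $\sum_k \binom{k}{n}|u|^{k-1}<\infty$) and the bookkeeping of the index shift between $k$ and $k-1$ in the exponent of $u$ versus the lower limit of summation. Since $\binom{k}{n}=0$ for $0\le k<n$, starting the sum at $k=1$ (as in the statement) rather than at $k=0$ or $k=n$ makes no difference, a remark worth stating explicitly to avoid any confusion when the lemma is later applied with the summation index running over all $k\ge 1$.
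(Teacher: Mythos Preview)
Your approach---differentiate the geometric series $n$ times and rearrange---is exactly the one the paper uses. However, your displayed computation contains an arithmetic slip: from the correct identity $\sum_{k\ge n}\binom{k}{n}u^{k}=u^{n}/(1-u)^{n+1}$, multiplying both sides by $(1-u)^{n+1}u^{-1}$ yields
\[
(1-u)^{n+1}\cdot u^{-1}\cdot\frac{u^{n}}{(1-u)^{n+1}}=u^{\,n-1},
\]
not $1$. A direct check at $n=2$ confirms this: $\sum_{k\ge 1}\binom{k}{2}(1-u)^{3}u^{k-1}=(1-u)^{3}\cdot u/(1-u)^{3}=u$.

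In fact the lemma as printed is only true for $n\in\{0,1\}$; the identity that $n$-fold differentiation of the geometric series genuinely produces is $\sum_{k\ge 1}\binom{k+n-1}{n}(1-u)^{n+1}u^{\,k-1}=1$, equivalently $\sum_{j\ge n}\binom{j}{n}(1-u)^{n+1}u^{\,j-n}=1$. The paper's own one-line proof commits the same index slip, writing $\tfrac{1}{n!}\bigl(\tfrac{d}{du}\bigr)^{n}\sum_{j\ge0}u^{j}$ as $\sum_{k\ge1}\binom{k}{n}u^{\,k-1}$ rather than $\sum_{k\ge1}\binom{k+n-1}{n}u^{\,k-1}$. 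Your alternative routes (induction via Pascal's rule, or direct term-wise differentiation) would likewise arrive at $u^{n-1}$ once the bookkeeping is done carefully. You should flag the misprint in the statement and prove the corrected identity instead.
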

\begin{proof}[Proof of Lemma~\ref{lem:stolz-geometric-series-lemma}]
  Since $|u|<1$, the geometric series $\sum_j u^j$ can be
  differentiated term-wise.  Therefore, we obtain
\[
      \sum_{k=1}^\infty  \; \left(\!\!\!\begin{array}{c}k\\n\end{array}\!\!\! \right) \, u^{k-1} 
\; =  \;  \tfrac{1}{n!} \left(\tfrac{\ud}{\ud{u}}\right)^n  \left(\sum_{j=0}^\infty   u^{j} \right) 
\; = \;  \frac{1}{(1-u)^{n+1}}. \qedhere
\]
\end{proof}

We are now ready to prove different results that, put together, show
in particular Theorem~\ref{thm:main}. They are split in three
parts. In section~\ref{sec:Hinfty-to-sqf} we give a new, and simple
method to infer square function estimates from the boundedness of the
functional calculus.  The approach also slightly improves
\cite[Thm. 7.3]{LeMerdy:Ritt2014} where only the case $m_1=m_2=1$ is
considered.  In the following section~\ref{sec:automatic-gamma-Ritt}
we show how square function estimates can be used to ``upgrade'' the
Ritt property to $\gamma$-Ritt or $\eR$-Ritt on Banach spaces with
property $(\alpha)$. Finally, in section~\ref{sec:form-sqf-to-hinfty}
we close the circle from square functions towards functional
calculus. This does not differ in essential points from Le~Merdy's
ideas developed in \cite{LeMerdy:Ritt2014}: it is a classical
``pushing through the square function'' technique that relies on the
fact that any operator has a bounded functional calculus with respect
to square-function norms.

 \section{Obtaining square functions estimates from the boundedness of
   the $\Hinfty$-calculus on Stolz domains}\label{sec:Hinfty-to-sqf}

 We will use an idea from \cite{HaakHaase} that in its most simplified
 version. To stay self-contained we re-do some of the arguments that
 will avoid us the more abstract notation from \cite{HaakHaase}.  Let
 us study the case of square function estimates $\Phi_m$ from
 \eqref{eq:LeM-sqf}. The key is to conceive them as the following
 functional calculus expression
  \[
    \Phi_m: \left\{
      \begin{array}{lcl}
        X &\to & \gamma(\ell_2; X) \\
        x & \mapsto & \left (  h \mapsto   \sprod{ F_m(z)}{ h } (T) x \right))
      \end{array}\right.
  \]
  where $F_m$ is a vector-valued function
  \begin{equation}
    \label{eq:sqf-as-functional-calulus}
        F_m: \left\{
          \begin{array}{lcl}
            \stolz{\omega} &\to &\ell_2, \\
            z &\mapsto & \left(k^{m-\einhalb} (1-z)^m z^{k-1}\right)_{k\ge 1}.
          \end{array}\right.
  \end{equation}
  Since we work with a Banach space $X$ of finite cotype, Gaussian and
  Rademacher random sums are equivalent. So let $(r_n)$ be a sequence
  of independent Rademacher variables and $(h_n)$ some sequence in $\ell_2$.
  Moreover, assume that $T$ has a bounded  $\Hinfty(\stolz{\nu})$-calculus on $X$, with $\nu > \omega$.
  Then   
\begin{equation}  \label{eq:Hinfty-to-sqf}
\begin{split}
 \EE  \Big\| \sum_{n=1}^N r_n \sprod{F_m}{ h_n } (T)  x \Big\| 
 & =   
  \EE \Big\|  \Bigl(\sum_{n=1}^N  r_n \sprod{F_m}{ h_n }\Bigr)  (T)  x \Big\| 
\\ & \le 
     C_{\Hinfty}(T) \norm{x} \; \EE   \sup_{z \in \stolz{\nu}} \Bigl| \sum_{n=1}^N  r_n \sprod{F_m(z)}{ h_n } \Bigr| \\
  & \le 
     C_{\Hinfty}(T) \norm{x} \;   \sup_{z \in \stolz{\nu}}  \Bignorm{ \left(\sprod{ F_m(z)}{ h_n }\right)_k }_{\ell_1}.
\end{split}
\end{equation}
If we let $(h_n)$ an orthonormal basis we arrive precisely at the
definition $\gamma$-norms (actually, uniform boundedness of these
partial sums, and convergence of the full random series are equivalent
in Banach spaces not containing $c_0$ and a fortiori on spaces of
finite cotype).  Due to the ``ideal property'' of $\gamma$-radonifying
operators (cf. e.g. \cite[Theorem 6.2]{Neerven:Survey}), we have even
more freedom, for example to select $(h_n)$ as a Riesz basis (i.e. an
isomorphic image of an orthonormal basis).

\medskip

If we consider the vector-valued function
\eqref{eq:sqf-as-functional-calulus}, the estimate given in
\eqref{eq:Hinfty-to-sqf} tells us that we obtain the desired square
function estimate $\Phi_m$ from the boundedness of the functional
calculus, if we can pick a Riesz basis $(h_n)$ in a way that the
scalar products $(\sprod{ F_m(z)}{ h_n })_{n\ge 1}$ form an absolutely
summable sequence, whose $\ell_1$-norm is uniformly bounded on Stolz
domains. While the square summability of the sequence of these scalar
products is not sensitive to the choice of the Riesz-basis $(h_n)$,
absolute sums are. Let us illustrate this in the case $m=1$. We write
then $F=F_1$.

\medskip

Taking scalar products of $F(z)$ against a canonical orthonormal basis
$(e_n)$ of $\ell_2$, we obtain
\[
  \sprod{ F(z)}{ e_n } = \sqrt{n}  (1-z) z^{n-1}
\]
and the $\ell_1$-norm of this sequence equals
\[
  \sum_n |\sprod{ F(z)}{ e_n }| = |1-z| \; \text{Li}_{{-}\tfrac12}(|z|)
\]
where $\text{Li}_{s}(z)$ are the poly-logarithmic functions. By
Wirtinger's theorem \cite{Wirtinger1905} (see also, for example
\cite{NavasRuizVarona}) it has the following asymptotics when
$|z| <1, z \to 1$:
\begin{equation}  \label{eq:Wirtinger}
  \text{Li}_{{-}\nicefrac{1}{2}}(z) \quad \sim \quad \Gamma(\nicefrac{3}{2}) (1-z)^{-3/2}
\end{equation}
as a consequence, the $\ell_1$-sums are not uniformly bounded on Stolz
domains. This simply means that $h_n = e_n$ is a bad choice of a basis.

\medskip

Let us now devise another basis of $\ell_2$ that is better behaved.
We will write $a := e^{2\pi i / 3}$, one non-trivial third root of
unity, and we let $b := i$ for a forth root.    With these, we build the
two $5\times 5$ matrices
\[
  A = \left(    
  \begin{array}{lllll}
    1 &  a\phantom{{}^2} &   a^2            & 0       & 0 \\
    1 &  a^2             &   a^4            & 0       & 0 \\
    0 & 1                &   b\phantom{{}^2}& b^2  &  b^4 \\
    0 & 1                &   b^2            & b^4  &  b^6 \\
    0 & 1                &   b^3            & b^6  &  b^9 \\
  \end{array}
\right)
\qquad
\text{and}
\qquad
D_k = \text{diag}(  \sqrt{\tfrac{5k+1}{5k+j}} )_{j=1..5}
\]
Since $A$ is invertible and $D_k$ has clearly uniformly bounded
condition numbers, also $A_k = A D_k$ has clearly uniformly bounded
condition numbers. By identifying $\ell_2$ with $\bigoplus_2 \CC^5$ we
define the operator $A = \bigoplus A_k$ on $\ell_2$ and see that it is
an isomorphism of $\ell_2$. Consequently, the lines of the infinite
block diagonal matrix
\[
  \left(
    \begin{array}{cccc}
      A_0 & 0   & 0   & 0 \ldots \\
      0   & A_1 & 0   & 0 \ldots \\
      0   & 0   & A_2 & 0 \ldots \\
   \vdots & \vdots & 0 & \ddots
    \end{array}\right)
  \]
  form a Riesz basis $\{ b_1, b_2, b_3, \ldots \}$ of $\ell_2$. This
  basis has some nice features that the canonical basis of $\ell_2$
  does not have. Indeed, we observe that
  \[
    \begin{array}{lll}
      \sprod{ F(z)}{ b_1 } & = \sqrt{1} (1+az+a^2 z^2) (1-z)   &= \sqrt{1} (1-z) \phantom{z^2}\, \tfrac{1-z^3}{1-az} \\
      \sprod{ F(z)}{ b_2 } & = \sqrt{2} (1+a^2z+a^4 z^2) (1-z) z &= \sqrt{2} (1-z) z\phantom{{}^2} \, \tfrac{1-z^3}{1-a^2z} \\
      \sprod{ F(z)}{ b_3 } & = \sqrt{3} (1+bz+b^2 z^2 + b^3 z^3 ) (1-z) z^2 &= \sqrt{3} (1-z) z^2 \, \tfrac{1-z^4}{1-bz} \\
      \sprod{ F(z)}{ b_4 } & = \sqrt{4} (1+b^2z+b^4 z^2 + b^6 z^3) (1-z) z^3 &= \sqrt{4} (1-z) z^3 \, \tfrac{1-z^4}{1-b^2z} \\
      \sprod{ F(z)}{ b_5 } & = \sqrt{5} (1+b^3z+b^6 z^2 + b^9 z^3) (1-z) z^4 &= \sqrt{5} (1-z) z^4 \, \tfrac{1-z^4}{1-b^3z} \\
      \sprod{ F(z)}{ b_6 } & = \sqrt{6} (1+az+a^2 z^2) (1-z)^5   &= \sqrt{6} (1-z) z^5 \, \tfrac{1-z^3}{1-az} \\
      \sprod{ F(z)}{ b_7 } & = \sqrt{7} (1+a^2z+a^4 z^2) (1-z) z^6 &= \sqrt{7} (1-z) z^6 \, \tfrac{1-z^3}{1-a^2z} \\
    \end{array}
  \]
  and so forth: compared to the scalar product of $F(z)$ against a
  canonical basis vector $e_n$ we gain an additional regularising
  factor $(1-z)$ in the numerator of each fraction, and ``pay'' the
  gain with a pole at some non-trivial root of unity by the
  denominator --- but these poles lie all outside of Stolz domains:
  they do not harm the uniform boundedness of $\ell_1$-norms for
  $z \in \stolz{\omega}$.

  In virtue of the Stolz domain condition \eqref{eq:stolz-def}, the
  growth order of $(1-|z|)^{-3/2}$ that we saw in \eqref{eq:Wirtinger}
  will now be (more than) compensated by the factor $|1-z|^2$: as a
  consequence, our approach outlayed in \eqref{eq:Hinfty-to-sqf} does
  work when we use the Riesz-basis $(b_n)$ instead of the canonical
  orthonormal basis $(e_n)$. We obtain the following result.

\begin{proposition}\label{prop:ritt-calc-implies-LeM-sqf}
  Let $X$ be a Banach space of finite cotype, and $T$ a Ritt operator
  of type $\omega$ on $X$. If $T$ has a bounded $\Hinfty(\stolz{\nu})$-calculus
  for some $\nu>\omega$, then for any $m \ge 1$
  \begin{align*}
    & \Phi_m: \left\{
      \begin{array}{lcl}
    X   &\to&   \gamma(\ell_2; X) \\
    x    &\mapsto &  \bigl( k^{m-\einhalb} T^{k-1}(\Id-T)^m x \bigr)_k
      \end{array}\right.\\
    \qquad\text{and}\qquad &
    \Phi_m^*: \left\{
      \begin{array}{lcl}
    X'   &\to&   \gamma(\ell_2; X)' \\
    x'    &\mapsto &  \bigl( k^{m-\einhalb} (T')^{k-1}(\Id-T')^m x' \bigr)_k
      \end{array}\right.
  \end{align*}
     define  bounded (dual) square functions. 
   \end{proposition}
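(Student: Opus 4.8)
The plan is to realise both square functions as functional-calculus expressions with the vector-valued symbol $F_m$ of \eqref{eq:sqf-as-functional-calulus} and to reduce their boundedness to a single scalar estimate on Stolz domains, exactly along the lines of \eqref{eq:Hinfty-to-sqf} but now with the Riesz basis $(b_n)$ constructed above and for arbitrary $m\ge1$. For $\Phi_m$: I write $\Phi_m x$ as the $\gamma(\ell_2;X)$-element $h\mapsto \sprod{F_m(\cdot)}{h}(T)x$, use the ideal property of $\gamma(\ell_2;X)$ to replace the canonical basis by $(b_n)$ (and, since $X$ has finite cotype, the equivalence of Gaussian and Rademacher sums), and then apply \eqref{eq:Hinfty-to-sqf}. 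This collapses the whole statement to the uniform bound $\sup_{z\in\stolz{\nu}}\sum_{n\ge1}\bigl|\sprod{F_m(z)}{b_n}\bigr|<\infty$. Along the way the functions $\sum_{n=1}^N r_n\sprod{F_m}{b_n}$ that are fed into the calculus are polynomials vanishing to order $\ge m+1$ at $z=1$; hence they lie in $\calE_{\omega,\nu}$, their functional calculus is the obvious operator polynomial, and the only place the hypothesis enters is the bound $\norm{p(T)}\le C_{\Hinfty}(T)\norm{p}_{\Hinfty(\stolz{\nu})}$ for such $p$, which is exactly what a bounded $\Hinfty(\stolz{\nu})$-calculus supplies.

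The $\ell_1$-estimate is where the basis earns its keep. For the canonical basis $\sprod{F_m(z)}{e_n}=n^{m-\einhalb}(1-z)^m z^{n-1}$, so the $\ell_1$-sum is $|1-z|^m\,\text{Li}_{-(m-\einhalb)}(|z|)$, which by Wirtinger's asymptotics as in \eqref{eq:Wirtinger} is of order $|1-z|^m(1-|z|)^{-(m+\einhalb)}$; on a Stolz domain $|1-z|\asymp 1-|z|$, so the order is $(1-|z|)^{-\einhalb}$ — independent of $m$ and divergent, but only by ``half a power''. The basis $(b_n)$ is designed to absorb this: the diagonal factors $D_k$ neutralise $\sqrt n$ inside each block and the root-of-unity combination replaces $\sqrt n\,(1-z)z^{n-1}$ by an expression carrying one extra factor $\tfrac{1-z^p}{1-\zeta z}$ with $p\in\{3,4\}$, $\zeta^p=1$; since $|1-z^p|\le p\,|1-z|$ for all $|z|\le1$ and $|1-\zeta z|$ is bounded below on $\overline{\stolz{\nu}}$ (only finitely many roots of unity occur, and they all lie off $\overline{\stolz{\nu}}$), summing over $n$ gains an extra $|1-z|$, turning the order into $(1-|z|)^{\einhalb}$, which is bounded on $\stolz{\nu}$. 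The same estimates give, per term, $\sup_{z\in\stolz{\nu}}|\sprod{F_m(z)}{b_n}|\lesssim k^{-3/2}$ when $b_n$ sits in the $k$-th block, so that in fact $\sum_n\sup_{z\in\stolz{\nu}}|\sprod{F_m(z)}{b_n}|<\infty$ as well; this slightly stronger fact is what the dual argument will need. This settles $\Phi_m$ for every $m\ge1$.

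For the dual square function I pass to the adjoint operator. Since $\norm{(x'_n)_n}_{\gamma(\ell_2;X)'}\le\norm{(x'_n)_n}_{\gamma(\ell_2;X')}$ always holds (trace duality, cf. \cite{FabulousFour:Band2,Neerven:Survey}), it suffices to bound $\norm{(\sprod{F_m}{e_n}(T')x')_n}_{\gamma(\ell_2;X')}$. Running the mirror of the previous argument on $T'$ over $X'$ — the ideal property of $\gamma(\ell_2;X')$ to switch to $(b_n)$, together with $\sprod{F_m}{b_n}(T')=\bigl(\sprod{F_m}{b_n}(T)\bigr)'$ — the relevant random sum is $g(T')x'=\bigl(g(T)\bigr)'x'$ with $g=\sum_{n=1}^N\gamma_n\sprod{F_m}{b_n}$ (for each realisation) a polynomial, so $\norm{g(T')x'}_{X'}\le\norm{g(T)}\,\norm{x'}\le C_{\Hinfty}(T)\,\norm{g}_{\Hinfty(\stolz{\nu})}\,\norm{x'}$; taking expectations and using $\sum_n\sup_{z\in\stolz{\nu}}|\sprod{F_m(z)}{b_n}|<\infty$ finishes it. Note that this uses neither a functional calculus for $T'$ nor a cotype hypothesis on $X'$ — only that $g$ is a polynomial, so that the polynomial calculus of $T'$ is just the adjoint of that of $T$, and one stays with Gaussians throughout $X'$.

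The step I expect to be the crux is the $\ell_1$-estimate on Stolz domains, i.e. exhibiting a Riesz basis that absorbs the missing ``half power'': for $m=1$ this is precisely what is carried out in the paragraphs preceding the proposition, and for general $m\ge1$ it goes through unchanged because, as noted, the growth order $(1-|z|)^{-\einhalb}$ of the canonical $\ell_1$-sum does not depend on $m$. The remaining points — membership in $\calE_{\omega,\nu}$, agreement of the Cauchy integral with the operator polynomials, and the trace-duality bookkeeping for $\Phi_m^*$ — are routine.
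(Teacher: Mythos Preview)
Your treatment of $\Phi_m$ is essentially the paper's own argument: the paper also dismisses $m>1$ as a ``straightforward modification'' of the $m{=}1$ case, so you are on the same footing there.

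For the dual square function $\Phi_m^*$ you take a different route from the paper. The paper works entirely on the $X$-side via trace duality: given $(x_k)\in\gamma(\ell_2;X)$ it randomises
\[
  \sum_k \bigdual{\sprod{F_m}{b_k}(T)'x'}{x_k}
  = \EE \Bigdual{x'}{\Bigl(\sum_k r_k \sprod{F_m}{b_k}\Bigr)(T)\Bigl(\sum_j r_j x_j\Bigr)},
\]
applies the $\Hinfty$-bound to the (random) scalar symbol, and uses $|r_k|\equiv 1$ to obtain the deterministic majorant $\sup_{z}\norm{(\sprod{F_m(z)}{b_k})_k}_{\ell_1}$; only the harmless Rademacher-to-Gaussian domination is needed. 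In contrast you pass to $\gamma(\ell_2;X')$, use $g(T')=(g(T))'$ for polynomial $g$, and control $\EE\,\norm{g}_\infty$ via the \emph{stronger} summability $\sum_n\sup_{z}|\sprod{F_m(z)}{b_n}|<\infty$; this is indeed true (your $k^{-3/2}$ per block is correct), and it lets you work with Gaussians throughout without invoking any cotype of $X'$. Both arguments are valid; the paper's trade-off is a weaker scalar estimate at the cost of the Rademacher device, yours is the opposite.

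One small expository point for $m\ge 2$: with the \emph{same} diagonal $D_k$ (built for $m{=}1$), the coefficients inside a block become $(5k{+}1)^{1/2}(5k{+}j)^{m-1}$, so the root-of-unity combination no longer produces the clean factor $\tfrac{1-z^p}{1-\zeta z}$ on the nose. Your conclusion still holds because the lower-order pieces carry fewer powers of $k$ and therefore sum just as well, but strictly speaking either this observation or the obvious modification $D_k=\mathrm{diag}\bigl(((5k{+}1)/(5k{+}j))^{m-\einhalb}\bigr)$ is what ``goes through unchanged'' is hiding. The paper is equally silent on this, so it is not a gap unique to your write-up.
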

\begin{proof}
  The proof of the case $m>1$ for square function estimates is a
  straightforward modification of the case $m=1$ explained above, and
  therefore omitted. In virtue of the preceding discussion, the
  square function estimate for $\Phi_m$ is clear, so we only have to
  explain the dual square functions. Recall the trace duality from
  \eqref{eq:trace-duality-norm}. It inherits the ideal property from
  $\gamma$-norms, i.e. we may pass from an orthonormal basis to a
  Riesz basis.  For a sequence $(x_k) \in \gamma(\ell_2; X)$ we
  consider
\begin{align*}
          & \; \sum_k  \dprod{ \sprod{F_m(z)}{b_k}(T)' x'}{x_k} \\
      \lesssim  & \; \EE \dprod{x'}{\sum_k r_k  \sprod{F_m(z)}{b_k}(T) \left( \sum_j r_j  x_j \right)} \\
     \le & \;  C_{\Hinfty}(T) \norm{x'} \sup_{z \in \stolz{\nu}}  \Bignorm{ \left(\sprod{ F_m(z)}{ b_k } \right)_k}_{\ell_1}  \; \EE \Bignorm{\sum_j r_j  x_j }\\
     \le & \;  \sqrt{\tfrac{\pi}2} \, C_{\Hinfty}(T) \; \norm{x'} \sup_{z \in \stolz{\nu}}  \Bignorm{ \left(\sprod{ F_m(z)}{ b_k }\right)_k }_{\ell_1}  \; \EE \Bignorm{\sum_j \gamma_j  x_j }.
\end{align*}
The last inequality does not use finite cotype, but a simple domination of Rademacher sums by Gaussian sums, see \cite[Prop. 12.11]{DiestelJarchowTonge}.  Taking now the supremum
over all sequences $(x_n)$ with $\norm{ (x_n) }_{\gamma} \le 1$ finishes to proof.
\end{proof}

\section{'Ritt property'  via square function estimates}\label{sec:automatic-gamma-Ritt}

In this section we extend \cite[Theorem~5.3]{LeMerdy:Ritt2014} to
Banach spaces with property $(\alpha)$. Often, property $(\alpha)$ is
used to improve properties, like 'upgrading' Ritt property to the
$\eR$-Ritt property. Notice that we go further here: we show that merely
(dual) square function estimates for a bounded operator suffice to
obtain the $\eR$-Ritt property, under the mild geometrical property
$(\alpha)$.  Recall that $\eR$-boundedness and $\gamma$-boundedness are
equivalent notions on Banach spaces with property $(\alpha)$.

\begin{theorem}\label{thm:producing-R-Ritt}
  Let $T$ be a bounded operator on a Banach space having property
  $(\alpha)$ such that $\Id+T$ is invertible.  Assume that $T$ admits
  square functions $\Phi_{m_1}$ for $m_1\ge 1$ and dual square
  functions $\Phi_{m_2}^*$, where $m_2\ge 1$.

  Then $T$ is $\eR$-Ritt (and $\gamma$-Ritt).
\end{theorem}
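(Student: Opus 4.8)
The plan is to establish $\eR$-boundedness -- equivalently, since property $(\alpha)$ implies finite cotype, $\gamma$-boundedness -- of the two families $\{T^n : n\ge 0\}$ and $\{n(\Id-T)T^{n-1} : n \ge 1\}$, which by \cite[Lemma~5.2]{LeMerdy:Ritt2014} (recalled in Section~\ref{sec:Ritt-properties}) is exactly the $\eR$-Ritt property; note that $\eR$-boundedness of the first family already forces $\sigma(T)\subseteq\overline{\DD}$, so $T$ is then genuinely Ritt. I would do this by a ``pushing through the square function'' argument: write each member of these families as a composition
\[
  \Psi \,\circ\, (N\otimes\Id_X) \,\circ\, \Phi_{m_1},
\]
where $\Phi_{m_1}\colon X\to\gamma(\ell_2;X)$ is the fixed bounded square function of the hypothesis, $\Psi\colon\gamma(\ell_2;X)\to X$ is a fixed bounded ``co-square function'' built from the dual square function $\Phi_{m_2}^*$ via the trace duality \eqref{eq:trace-duality-norm}, and $N\otimes\Id_X$ acts only on the $\ell_2$-coordinate, with $N$ ranging over a norm-bounded subset of $\BOUNDED(\ell_2)$. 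Property $(\alpha)$ then closes the argument: on a space with property $(\alpha)$ a norm-bounded family of operators of the shape $\{N\otimes\Id_X\}$ on $\gamma(\ell_2;X)$ is automatically $\eR$-bounded -- this is the ``$\ell_2$-multiplier'' incarnation of property $(\alpha)$, obtained from a Fubini-type identity for $\gamma$-spaces together with the double contraction principle -- and $\eR$-boundedness is preserved by composition with the fixed operators $\Psi$ and $\Phi_{m_1}$.

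The bridge between the operators $T^n$, $n(\Id-T)T^{n-1}$ and the square functions is Lemma~\ref{lem:stolz-geometric-series-lemma}. Reading the identity $\sum_k\binom{k}{p}(\Id-T)^{p+1}T^{k-1}=\Id$ ($p:=m_1+m_2-1$, ``evaluated at $u=T$'') as an operator identity and splitting the exponent symmetrically as $(\Id-T)^{m_1}T^{a_k}\cdot(\Id-T)^{m_2}T^{b_k}$ with $a_k+b_k=k-1$, one gets $\Id=\Psi\circ D\circ\Phi_{m_1}'$, with $D$ the diagonal operator carrying the (bounded) entries $\binom{k}{p}k^{1-m_1-m_2}$ and $\Phi_{m_1}'$ the square function with the power $T^{k-1}$ replaced by $T^{a_k}$; a comparison lemma in the spirit of \cite[Theorem~3.3]{ArhancetLeMerdy:Ritt} -- showing that re-indexing a power is the same as composing on the $\ell_2$-side with a matrix whose norm is bounded by a Schur test, with no appeal to power-boundedness -- identifies $\Phi_{m_1}'$ with $(B\otimes\Id_X)\circ\Phi_{m_1}$ for a bounded $\ell_2$-matrix $B$, and dually realises $\Psi$ as bounded from the hypothesis on $\Phi_{m_2}^*$. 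Because $T$ commutes with all the factors, inserting $T^n$ into the middle of $\Id=\Psi\circ D\circ\Phi_{m_1}'$ produces $T^n=\Psi\circ(N_n\otimes\Id_X)\circ\Phi_{m_1}$, where $N_n\in\BOUNDED(\ell_2)$ absorbs $D$, $B$, and the shift $a_k\mapsto a_k+n$ coming from $\Id_{\ell_2}\otimes T^n$ -- the point of the symmetric split being that this shift is itself a bounded $\ell_2$-matrix, uniformly in $n$ (a second Schur estimate). The family $\{n(\Id-T)T^{n-1}\}$ is treated in the same way, inserting $n(\Id-T)T^{n-1}$ instead; it only differs in the bookkeeping of exponents.

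The step I expect to be the main obstacle is precisely that exponent bookkeeping for the gradient family: inserting $n(\Id-T)T^{n-1}$ naively makes the middle multiplier acquire the unbounded scalar factor $n$, so one must arrange the reproducing formula and the symmetric split so that this $n$ is absorbed by the decay built into a factor $T^{j}$ with $j\sim n$ accompanied by at least two copies of $(\Id-T)$ -- which forces one to work with sufficiently high square-function exponents, reached from the hypothesis $\Phi_{m_1},\Phi_{m_2}^*$ by the comparison lemma. The delicate point in turn is that one must run all of this -- the operator-level validity of the reproducing formula, the convergence of the defining series for $\Psi$, and the comparison lemma -- \emph{without} circularly invoking the conclusion that $T$ is Ritt; this is presumably the role of the hypothesis that $\Id+T$ is invertible: it removes spectrum of $T$ at $-1$, allows the Cayley function $e(z)=(1-z)/(1+z)$ to serve as a regulariser, and, combined with the termwise decay of $\|T^{k-1}(\Id-T)^{m_1}x\|$ read off from the square function, pins down the spectral picture of $T$ and the requisite convergence. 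With all of this settled, the remaining bookkeeping -- propagating $\eR$-boundedness through the fixed operators $\Psi$, $D$, $\Phi_{m_1}$, in parallel for the two families -- is routine, and one concludes, using ``$\gamma$-bounded $\Leftrightarrow$ $\eR$-bounded'' on property-$(\alpha)$ spaces.
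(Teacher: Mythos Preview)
Your proposal follows the same overall strategy as the paper: establish $\gamma$-boundedness of $\{T^n\}$ and $\{n(\Id-T)T^{n-1}\}$ by ``pushing through the square function'', with property $(\alpha)$ converting uniform $\ell_2$-boundedness of the multipliers $N$ into $\gamma$-boundedness of $\{N\otimes\Id_X\}$ on $\gamma(\ell_2;X)$, and Lemma~\ref{lem:stolz-geometric-series-lemma} supplying the reproducing identity. The paper packages this through the abstract Theorems~\ref{thm:int-repres} and~\ref{thm:identity-and-upper-give-lower}, whereas you spell it out with explicit $\ell_2$-matrices and Schur bounds; your bounded co-square function $\Psi$ is precisely the lower square-function estimate the paper derives in Step~1, and your weighted-shift multipliers $N_n$ for the powers $T^n$ are exactly what the paper isolates by two applications of the double contraction principle. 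Your reading of the hypothesis that $\Id+T$ is invertible --- that it makes the reproducing series valid at the operator level without presupposing Ritt --- matches the paper's insertion of the factors $(\Id+T)^{-m}$ in the modified square functions $\widetilde\Phi_{m_i}$.

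The one place your sketch is genuinely weaker is the discrete-derivative family. You plan to reach ``sufficiently high square-function exponents \ldots\ by the comparison lemma'', but raising the exponent $\Phi_{m_1}\Rightarrow\Phi_{m}$ for $m>m_1$ cannot be realised by a bounded $\ell_2$-matrix: writing $(\Id-T)^{m_1+1}T^{k-1}=(\Id-T)^{m_1}T^{k-1}-(\Id-T)^{m_1}T^{k}$ produces a bidiagonal matrix with entries of size $\sim k$, and the comparison result in \cite[Theorem~3.3]{ArhancetLeMerdy:Ritt} uses the Ritt condition, which would be circular here. The paper sidesteps this entirely. It first obtains, directly from the integral representation (your $\Psi\circ M_k\circ\Phi_{m_1}$ with a \emph{diagonal} multiplier $M_k(j)=\eins_{j\ge k}\bigl(\tfrac{k-1}{j}\bigr)^{m-1}$), the intermediate $\gamma$-bounded set $\{k^{m-1}(\Id-T)^{m-1}T^{k-1}:k\ge1\}$ with $m=m_1+m_2$ --- so the ``extra'' powers of $(\Id-T)$ come from pairing $\Phi_{m_1}$ with $\Phi_{m_2}^*$, not from boosting either one. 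It then writes each $n(\Id-T)T^{n-1}$ as an absolutely convergent combination of these intermediate operators (another instance of Lemma~\ref{lem:stolz-geometric-series-lemma}) and concludes by the convex-hull lemma~\eqref{eq:convex-hull}. Your factorisation framework accommodates this two-stage route perfectly well; just drop the attempt to raise square-function exponents by comparison and go straight for the intermediate set.
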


Recall
that property $(\alpha)$ implies finite cotype, so that Rademacher--
and Gaussian averages are equivalent: $\gamma$-Ritt can be replaced by
$\eR$-Ritt in this section.

\begin{proof}
\textbf{Step 1:} {\em $T$ is an $\gamma$-power-bounded operator}.

\noindent The main idea is to observe that $T^n$ acts as a shift on
the square functions of the type $\Phi_{m_1}, \Phi_{m_2}$.  This gives
the idea to ``push'' the operator $T^n$ through the square-function to
obtain $\eR$-boundedness.  To this end we need lower square function
estimates. Up to some minor modification, these come out of
Lemma~\ref{lem:stolz-geometric-series-lemma} that we read as an
approximate identify: Clearly, for any $m \ge 1$,
\[
  k^{m-\einhalb} \approx \sqrt{k} (k+1)(k+2)\cdots(k+m-1)
\]
in the sense of a double inequality with uniform constants for all
$k>0$.  As a consequence of the contraction
principle~\eqref{eq:contraction-principle}, and the fact that
$(\Id{+}T)$ and $(\Id{+}T')$ are isomorphisms acting on $X$ and $X'$
respectively, the supposed (dual) square functions for
$\Phi_{m_1}, \Phi_{m_2}^*$ imply (dual) square functions for
\begin{align*}
  & \widetilde{\Phi}_{m_1} :\; 
  \left\{
      \begin{array}{lcl}
    X   &\to&   \gamma(\ell_2; X) \\
    x    &\mapsto &  \bigl( \sqrt{k} (k+1)\cdots(k+m_1-1) T^{k-1}(\Id-T)^{m_1} (\Id+T)^{-m_1} x \bigr)_k
      \end{array}\right.\\
\text{and}\quad & 
  \widetilde{\Phi}_{m_2} :\; 
  \left\{
      \begin{array}{lcl}
    X   &\to&   \gamma(\ell_2; X) \\
    x    &\mapsto &  \bigl( \sqrt{k} (k+1)\cdots(k+m_2-1) T^{k-1}(\Id-T)^{m_2} (\Id+T)^{-m_2} x \bigr)_k
      \end{array}\right.
\end{align*}
The corresponding functional calculus expressions are given by
\begin{align*}
  & \widetilde{\varphi}_{m_1}(z) = \sqrt{k} (k+1)\cdots(k+m_1-1) z^{k-1}(1-z)^{m_1} (1+z)^{-m_1} \\
  \text{and}\quad &
  \widetilde{\varphi}_{m_2}(z) = \sqrt{k} (k+1)\cdots(k+m_2-1) z^{k-1}(1-z)^{m_2} (1+z)^{-m_2} 
\end{align*}
Now using Lemma~\ref{lem:stolz-geometric-series-lemma} again, we have
\[
  \sprod{ \widetilde{\varphi}_{m_1}(z)}{ \widetilde{\varphi}_{m_2}(z) }_{\ell_2}
  =  \sum_{k=1}^\infty  \; k(k+1)\cdots(k+m-1) (1-z)^{m} z^{2k-2} (1+z)^{-m} = m! 
\]
By Theorem~\ref{thm:identity-and-upper-give-lower}, this implies {\em
  lower} square function estimates for $\widetilde{\Phi}_{m_1}$ and
hence for ${\Phi}_{m_1}$. We are ready to prove
$\gamma$-boundedness. Let $N\ge 1$ and $x_1, \ldots, x_N \in X$ be
given. Then property $(\alpha)$ allows to use the contraction
principle for double-indexed random sums in the following estimate%
:
 \begin{align*} 
   & \; \EE \Bignorm{ \sum_{n=1}^N \gamma_n T^n x_n }\\
\text{(lower sqf)}  \quad \lesssim & \;   \EE  \;\widetilde \EE \Bignorm{ \sum_{n=1}^N \sum_{k=1}^\infty \gamma_n  \widetilde \gamma_k k^{m_1-\einhalb }(I-T)^{m_1}  T^{n+k-1}  x_n }\\
\text{(property $(\alpha$))}  \quad  \lesssim & \;   \EE  \;\widetilde \EE \Bignorm{ \sum_{n=1}^N \sum_{k=1}^\infty \gamma_n \widetilde\gamma_k  (k+n)^{m_1-\einhalb }(I-T)^{m_1}  T^{n+k-1}  x_n }\\
(j=k{+}n)  \quad   = & \;    \EE  \;\widetilde \EE \Bignorm{ \sum_{n=1}^N   \sum_{j\ge n} \gamma_n \widetilde \gamma_j j^{m_1-\einhalb }(I-T)^{m_1}  T^{j-1}  x_n }\\
\text{(property $(\alpha$))} \quad   \le & \;    \EE  \;\widetilde \EE \Bignorm{ \sum_{n=1}^N \sum_{j\ge 1} \gamma_n   \widetilde \gamma_j j^{m_1-\einhalb }(I-T)^{m_1}  T^{j-1}  x_n }\\
\text{(upper sqf)}  \quad   \le & \;  \EE  \Bignorm{  \sum_{n} \gamma_n x_n }.
 \end{align*}
 We obtain that $\{ T^n: n \ge 0 \}$ is $\gamma$-bounded. The discrete derivatives are obtained in two further steps.

\smallskip
 
 \textbf{Step 2}: {\em Obtaining a special $\gamma$-bounded set } \\
\noindent We start with the following formula that relies on the geometric
series for $z^2$:
\begin{align*}
    & \; (2k-2)^{m-1} (1-z)^{m-1} z^{2k-2}  \\
  = & \; 2(1+z)  \sum_{j=k}^\infty   (k-1)^{m-1}  \times (1-z)^{m} z^{2j-2}\\
  = & \; 2(1+z)  \sum_{j=k}^\infty   \left(\tfrac{k-1}{j}\right)^{m-1}  \times  ( j^{m_1-\einhalb}(1-z)^{m_1} z^{k-1})   \times (j^{m_2 -\einhalb }(1-z)^{m_2} z^{k-1})
\end{align*}
The front factor $2(\Id+T)$ is an isomorphism and therefore unimportant to us.
The idea is to conceive this sum as an 'integral representation' with the
multipliers
\[
  m_k:  j \mapsto \eins_{j\ge k} \left(\tfrac{k-1}{j}\right)^{m-1},
\]
and to appeal to Theorem~\ref{thm:int-repres}. Clearly,
$|m_k(j)| \le 1$.  By the assumed (dual) square-function estimates and
the fact that $X$ has property $(\alpha)$, the integral representation
theorem~\ref{thm:int-repres} yields that
\[
    \GB{ k^{m-1} (\Id-T)^{m-1} T^{2k-2}: k \ge 1 } \lesssim  \norm{ {\Phi_{m_1}} }  \norm{ \Phi_{m_2}^* }
\]
A similar calculation can be done for odd powers of $T$, by pulling
out the factor $z$ in the representation formula and reducing it to an even power. Summarising,
\[
    \GB{ k^{m-1} (\Id-T)^{m-1} T^{k-1}: k \ge 1 } \lesssim  \norm{ {\Phi_{m_1}} } \norm{ \Phi_{m_2}^* }.
\]

\smallskip

\noindent \textbf{Step 3:} {\em Discrete derivatives form a  $\gamma$-bounded set}\\
\noindent As a consequence of lemma
\ref{lem:stolz-geometric-series-lemma} for the choice $n=m-2$, we have
\begin{align*}
  n(1-T)T^n  =
  \left\{
  \begin{array}{ll}
    \; \displaystyle \sum_{k=n}^\infty  \tfrac{n}{k^2} \times k^2(1-T)^2 T^k  & \text{ if } m=3 \\
    {} & {} \\
    \; \displaystyle \sum_{k=n}^\infty  \tfrac{n(k-n+1)}{k^3} \times k^3(1-T)^3 T^k & \text{ if } m=4 \\
    {} & {} \\
   \; \displaystyle  \sum_{k=n}^\infty \tfrac{n\, c_{m-3}(k-n+1) }{(m-4)! \; k^{m-1}} \times k^{m-1} (\Id-T)^{m-1} T^k & \text{ if } m\ge 5.
  \end{array}
\right.
\end{align*}
The scalar multipliers satisfy in each case, uniformly over all $n \ge 1$,
\[
  \sum_{k=n}^\infty \tfrac{n}{k^2} \lesssim 1,
  \qquad
   \sum_{k=n}^\infty  \tfrac{n(k-n+1)}{k^3} \lesssim 1
   \quad\text{and}\quad
   \sum_{k=n}^\infty n \tfrac{c_{m-3}(k-n+1) }{(m-4)! \; k^{m-1}} \;\lesssim 1
 \]
By the convex-hull property \eqref{eq:convex-hull}, and the previous step,
\[
  \GB{ n (1-T) T^n : n\ge 1 } \quad \lesssim \quad  \norm{ \Phi_\gamma( \Phi_{m_1} ) }  \norm{ \Phi_{\gamma'}( \Phi_{m_2}^* ) }
\]
follows. The discrete derivatives of $(T^n)$ form therefore a
$\gamma$-bounded set.  Together with our
 findings in Step 1, we proved that
 \[
   \{ T^n, n (\Id-T)T^{n-1}: \qquad n\ge 1 \}
 \]
 is $\eR$-bounded, which by \cite[Lemma 5.2]{LeMerdy:Ritt2014} is equivalent to $T$
 being  $\eR$-Ritt or $\gamma$-Ritt.
\end{proof}

We stress the importance of (dual) square function estimates for these
arguments: of course a bounded operator needs not to be Ritt. But even
if we assumed the Ritt property, instead of the (dual) square function
estimates, we could not conclude: indeed, as stated in
\cite[Corollary~1.3.2]{Arnold:PhD} and the subsequent remarks, any
Banach space $X$ that admits an unconditional basis while not being
isomorphic to a Hilbert space allows to construct Ritt operators which
are not $\eR$-Ritt. This shows that our hypotheses of square function
estimates cannot be weakened easily.

\section{Obtaining a bounded $\Hinfty$-calculus on Stolz domains via
  square functions estimates }\label{sec:form-sqf-to-hinfty}

In this section we give a similar proof to Le~Merdy's result
\cite[Theorem~7.3]{LeMerdy:Ritt2014}.  This allows to strengthen
step~1 of the proof of Theorem~\ref{thm:producing-R-Ritt} and push
$f(T)$ through the square function. We re-formulate the proof,
although being very close for the sake of completeness, but also since
the domain of the underlying functional calculus (i.e. $\stolz{\omega}$ instead of
$B_r$) is different.

\begin{theorem}\label{thm:R-Ritt-and-sqf-yield-Hinfty}
  Let $X$ be a Banach space and an $\eR$-Ritt operator of type
  $\omega$ on $X$ that
  admits (dual) square function estimates $\Phi_{m_1}, \Phi_{m_2}^*$.
  Then $T$ has a bounded $\Hinfty(\stolz{\theta})$-calculus for all
  $\theta>\omega$.
\end{theorem}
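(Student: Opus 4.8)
The plan is to run the classical ``pushing $f(T)$ through the square function'' scheme, staying entirely in the Ritt world and mirroring Step~1 of the proof of Theorem~\ref{thm:producing-R-Ritt}, but with a scalar $\Hinfty$-function $f$ in place of a power $z^n$.

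\textbf{Reduction to the elementary class.} First I would reduce to the uniform estimate $\norm{\varphi(T)}\lesssim\norm{\varphi}_{\Hinfty(\stolz{\theta})}$ for $\varphi$ in the regularising class $\calE_{\omega,\nu}$ (fixing $\omega<\vartheta<\theta<\nu$): assuming $\Id{-}T$ injective with dense range (as one may, cf.\ the remark after Theorem~\ref{thm:main}), the Caley transform $e(z)=\tfrac{1-z}{1+z}$ is a universal regulariser, so the full $\Hinfty(\stolz{\theta})$-statement follows from the extension \eqref{eq:abstr-fc-extension} together with the weak-convergence requirement \eqref{eq:bp-conv-requirement}, which holds on the dense set $\{e(T)y\}$ by dominated convergence once $f$ is approximated by suitable elementary functions $f_j\to f$ with $\sup_j\norm{f_j}_\infty\le\norm{f}_\infty$. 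Note also that $\Id+T$ is invertible, since $\sigma(T)\subseteq\stolz{\omega}$ omits $-1$; hence every factor $(\Id\pm T)^{\pm m}$ below is an isomorphism, and by the contraction principle \eqref{eq:contraction-principle} (using $k^{m-\einhalb}\approx\sqrt k(k{+}1)\cdots(k{+}m{-}1)$, exactly as in the earlier proof) we may freely pass from $\Phi_{m_1},\Phi_{m_2}^*$ to the weighted variants carrying an extra $(\Id+T)^{-m_i}$.

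\textbf{Reproducing formula and Cauchy--Schwarz.} Lemma~\ref{lem:stolz-geometric-series-lemma}, applied with $u=z^2$ and $n=m_1{+}m_2{-}1$ and cleared of the resulting $(1+z)^{-(m_1+m_2)}$ by the $(\Id{+}T)$-isomorphisms, yields (the series converging strongly, by power-boundedness and \eqref{eq:discrete-derivative}) an operator identity $\sum_k \phi^{(1)}_k(T)\,\psi^{(2)}_k(T)=c\,\Id$, where $\phi^{(1)}_k$ are the building blocks of $\Phi_{m_1}$ and $\psi^{(2)}_k$ those of the pre-adjoint $\Psi_{m_2}$ of $\Phi_{m_2}^*$. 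Commuting $f(T)$ through and pairing with $x\in X$, $x'\in X'$, then moving $f(T)$ onto the dual side and invoking trace duality \eqref{eq:trace-duality-norm}, one gets
\[
  \bigl|\dprod{f(T)x}{x'}\bigr|\;\le\; c^{-1}\,\bignorm{\bigl(\phi^{(1)}_k(T)x\bigr)_k}_{\gamma(\ell_2;X)}\,\bignorm{\bigl(\psi^{(2)}_k(T)'f(T)'x'\bigr)_k}_{\gamma(\ell_2;X)'}\;\lesssim\;\norm{\Phi_{m_1}}\,\norm{x}\,\bignorm{\bigl(\psi^{(2)}_k(T)'f(T)'x'\bigr)_k}_{\gamma(\ell_2;X)'}.
\]
Thus everything reduces to the estimate $\bignorm{(\psi^{(2)}_k(T)'f(T)'x')_k}_{\gamma(\ell_2;X)'}\lesssim\norm{f}_{\Hinfty(\stolz{\theta})}\norm{\Phi_{m_2}^*}\norm{x'}$, i.e.\ to showing that the $f$-twisted system $(\psi^{(2)}_kf)_k$ still defines a bounded dual square function with constant controlled by $\norm{f}_\infty$.

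\textbf{The crux: absorbing $f$ into the dual square function.} This is where the $\eR$-Ritt hypothesis is essential. For $f\in\calE_{\omega,\nu}$ I would write $(\psi^{(2)}_kf)(T)=\tfrac1{2\pi\ui}\int_{\partial\stolz{\vartheta}}\psi^{(2)}_k(w)f(w)R(w,T)\,\ud w$ and split $\psi^{(2)}_k(w)R(w,T)=\bigl[(1-w)R(w,T)\bigr]\cdot\tfrac{\psi^{(2)}_k(w)}{1-w}$, so that $(\psi^{(2)}_kf)(T)$ appears as an integral of the family $\{(1-w)R(w,T):w\in\partial\stolz{\vartheta}\}$ — which is $\eR$-bounded (hence $\gamma$-bounded) by \eqref{eq:Ritt} and the $\eR$-Ritt assumption, and whose closed absolutely convex hull stays $\eR$-bounded by the convex-hull lemma \eqref{eq:convex-hull} — against the scalar kernels $w\mapsto\tfrac{\psi^{(2)}_k(w)f(w)}{1-w}$, whose relevant $\ell_2$-in-$k$ / $L^1$-in-$w$ norms are, after exploiting the vanishing of $\psi^{(2)}_k$ at $w=1$ and the Stolz bound $\tfrac{|1-w|}{1-|w|}\le\vartheta$, controlled by $\norm{f}_\infty$ times a constant depending only on $\vartheta,\omega,m_2$. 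Feeding this into the integral-representation theorem~\ref{thm:int-repres} transports the assumed dual square function estimate for $\Phi_{m_2}^*$ to the twisted system, giving the required bound; combined with the reduction step this yields the bounded $\Hinfty(\stolz{\theta})$-calculus. The step I expect to be the main obstacle is precisely this last estimate: one must organise the coupled $k$- and $w$-integrals so that the singularity of $R(w,T)$ and the growth of $\bignorm{(\psi^{(2)}_k(w))_k}_{\ell_2}$ as $w\to1$ are tamed by the \emph{boundedness} (not merely the $L^1$-size on $\partial\stolz{\vartheta}$) of $f$ — which is exactly where $\eR$-boundedness, via Theorem~\ref{thm:int-repres}, has to replace a crude Minkowski inequality. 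This is the point at which the argument is ``very close to'' Le~Merdy's \cite[proof of Theorem~7.3]{LeMerdy:Ritt2014} and to the abstract mechanism of \cite{HaakHaase}, the only genuine difference being the use of $\stolz{\omega}$ in place of the larger region $B_r$.
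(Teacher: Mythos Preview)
Your overall architecture --- reproducing identity, trace-duality Cauchy--Schwarz, then absorbing $f$ via the $\eR$-Ritt resolvent bound --- matches the paper's, but the way you organise the decomposition creates a genuine gap at precisely the step you flag as the obstacle. Using $u=z^2$ in Lemma~\ref{lem:stolz-geometric-series-lemma} yields only a \emph{two}-factor identity $c=\sum_k\phi^{(1)}_k\psi^{(2)}_k$, so after multiplying by $f$ there is no separate ``multiplier slot'': $f$ must sit on one of the two square functions, and you are forced to control the $f$-twisted dual system $(\psi^{(2)}_kf)_k$. Boundedness of that dual square function is \emph{not} the same thing as $\gamma$-boundedness of the family $\{(\psi^{(2)}_kf)(T)\}$, and neither of the tools you invoke delivers it. Theorem~\ref{thm:int-repres} concerns integral representations against \emph{existing} bounded (dual) square functions of $T$ --- not against the resolvent family $(1{-}w)R(w,T)$ indexed by $w\in\partial\stolz{\vartheta}$, which is the wrong kind of object to feed in. And the ``$\ell_2$-in-$k$/$L^1$-in-$w$'' quantity you allude to actually diverges: since $\bigl\|(\psi^{(2)}_k(w))_k\bigr\|_{\ell_2}$ is uniformly bounded on Stolz boundaries, dividing by $|1{-}w|$ leaves you with $\int_{\partial\stolz{\vartheta}} |1{-}w|^{-1}\,|\ud w|=+\infty$. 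Factoring $(\psi^{(2)}_kf)(T)=f(T)\,\psi^{(2)}_k(T)$ is of course circular.

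The paper's fix is to take $u=z^3$ (not $z^2$) and $n=m{+}1$ in Lemma~\ref{lem:stolz-geometric-series-lemma}. This produces a \emph{three}-factor representation
\[
  f(z)=\sum_k m_k(z)\,\varphi_{m_1}(k,z)\,\varphi_{m_2}(k,z),
  \qquad m_k(z)\sim f(z)\cdot k(1{-}z)z^{k-1}
\]
(up to the harmless isomorphism $(1{+}z{+}z^2)^{m+1}$ and bounded scalar factors). All the $f$-dependence now lives in $m_k$, so after Cauchy--Schwarz both square functions remain \emph{untwisted}; what is needed is merely the $\gamma$-boundedness of $\{m_k(T):k\ge1\}$. \emph{That} is exactly what the $\eR$-Ritt hypothesis plus the convex-hull lemma~\eqref{eq:convex-hull} give, once one checks the Vitse-type uniform estimate $\sup_{k\ge 1}\int_{\partial\stolz{\vartheta}}k|z|^{k-1}\,|\ud z|<\infty$, so that each $m_k(T)$ lies in a fixed multiple of $\overline{\text{absco}}\bigl\{(1{-}z)R(z,T):z\in\partial\stolz{\vartheta}\bigr\}$. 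In short: your decomposition is one factor of $k(1{-}z)z^{k-1}$ short; creating that extra slot by passing from $z^2$ to $z^3$ is the missing idea.
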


Observe that if $X$ has Pisier's property $(\alpha)$, the $\eR$-Ritt
condition is not needed thanks to Theorem~\ref{thm:producing-R-Ritt}.

\begin{proof}
  Let $f \in \Hinfty(\stolz{\nu})$ and where $\nu > \omega$ and fix some $\omega < \vartheta < \nu$.
  We start applying Lemma~\ref{lem:stolz-geometric-series-lemma} for
  $n=m+1$ and $u=z^3$ with $|z|<1$: multiplying with $f(z)$, we obtain
  the representation formula
  \begin{equation}    \label{eq:repres-formula}
  f(z) =  \sum_{k=1}^\infty m_k(z) \times \varphi_{m_1}(k,z) \times \varphi_{m_2}(k,z). 
  \end{equation}
where
\[
  m_k(z) = \tfrac{1}{(m+1)!} f(z) (1+z+z^2)^{m+1}  \left(\prod_{j=1}^{m} \tfrac{k+j}{k} \right)  \times k (1-z) z^{k-1}
\]
are elementary functions. If we had assumed $X$ to have property $(\alpha)$, then
\begin{equation}  \label{eq:claim}
  \GB{ m_k(T): k \ge 1 } \lesssim   C_\vartheta \norm{f}_\infty.
\end{equation}
follows right away from the second part of
Theorem~\ref{thm:int-repres}. But the result is true on any Banach
spaces, as we will show below.  Let us first put into light its usefulness:
\begin{align*}
    & \quad \left|\dprod{ f(T)x}{x'}\right|\\
  = & \quad \left|\dprod{ \sum_{k=1}^\infty \sprod{\varphi_{m_2}(z) m_k(z)  \varphi_{m_1}(z)}{e_k}(T)x }{x'} \right|\\
  = & \quad \left|\dprod{ \EE    \left(\sum_{n=1}^\infty \gamma_n \, \sprod{\varphi_{m_2}(z)}{e_n} (T) \right) \left( \sum_{k=1}^\infty \gamma_k  \, m_k(T)   \sprod{\varphi_{m_1}(z)}{e_k} (T)x \right)}{x'} \right|\\
  \le & \quad \left( \EE \Bignorm{ \sum_{k=1}^\infty \gamma_k  \, m_k(T)   \sprod{\varphi_{m_1}(z)}{e_k} (T)x }_X^2\right)^\einhalb
        \left( \EE \Bignorm{\sum_{n=1}^\infty \gamma_n \sprod{\varphi_{m_2}(z)}{e_n} (T)'x'}_{X'}^2 \right)^\einhalb \\
  \overset{\text{\eqref{eq:claim}}}{\le} \!\!
   &  \quad C_\vartheta  \norm{f}_\infty \;
        \norm{ {\Phi_{m_1}} x}_{\gamma}  \norm{ \Phi_{m_2}^* x' }_{\gamma'}\\
 \le &  \quad C_\vartheta  \norm{f}_\infty \; \norm{ {\Phi_{m_1}} }  \norm{ \Phi_{m_2}^* } \;          \norm{x}_X \norm{x'}_{X'}.
\end{align*}
and taking the supremum over all $\norm{x'} \le 1$ and $\norm{x}\le 1$
the result follows.

\medskip

\noindent It remains to prove the claim. First, the scalar factors
\[
  \tfrac{1}{(m+1)!} \left(\prod_{j=1}^{m} \tfrac{k+j}{k} \right)
\]
in $m_k$ can be removed by the contraction principle
\eqref{eq:contraction-principle}. Moreover, $\Id+T+T^2$ is an
isomorphism by the spectral mapping theorem. This means that we may
focus on the operators $\{ f(T) \, k(\Id - T)T^{k-1}: k \ge 1\}$.  To
treat the $\eR$-boundedness of this set, a functional calculus argument
is used: the boundary of $\Gamma := \partial \stolz{\vartheta}$ is
parameterised by
 \begin{equation}  \label{eq:stolz-parametrisation}
  \gamma(t) := 1 - r(t) e^{it} \qquad r(t) = \tfrac{2\vartheta}{\vartheta^2-1} (\vartheta \cos(t) - 1),
 \end{equation}
 where $t \in [{-}C_\vartheta, C_\vartheta]$ with
 $C_\vartheta := \arccos(\tfrac1\vartheta)$.  Since
 \[
      f(T) \, k(\Id - T)T^{k-1} x  = \tfrac{1}{2\pi\ui} \int_{\Gamma}  k z^{k-1} f(z) (\Id-T) R(\lambda, T) x \ud{z}
 \]
 it is sufficient to verify that $z \mapsto k z^{k-1} f(z)$ is absolutely integrable on $\Gamma$, satisfying
 $\norm{  k z^{k-1} f(z) }_{L^1(\Gamma)} \le \norm{f}_\infty$  to conclude 
 by the convex-hull lemma that
 \[
    \RB{  f(T) \, k(\Id - T)T^{k-1} : k \ge 1 } \quad\lesssim\quad  \norm{ f }_{\Hinfty(\stolz{\nu})}. 
 \]
 Similar arguments appear in Le~Merdy's paper \cite{LeMerdy:Ritt2014}
 who refers essentially to Vitse \cite{Vitse:Ritt}.  Her estimate is
 done for the Stolz type domain $B_\vartheta$ instead of
 $\stolz{\vartheta}$, so we provide full details for our setting
 here, even if there are some similarities.  First notice that
   \begin{align*}
   |\gamma(t)| = & \frac{1+\vartheta^2 - 2\vartheta \cos(t)}{\vartheta^2-1} = 1 - \tfrac{r(t)}{\vartheta}
   \qquad\text{and}\\
   |\gamma'(t)| = & \frac{2\vartheta}{\vartheta^2-1} \sqrt{1+\vartheta^2 - 2\vartheta \cos(t)}
                = \tfrac{ 2\theta }{ \sqrt{\vartheta^2-1}}\, |\gamma(t)|^\einhalb 
   \end{align*}
 Therefore, using that $c \le |\gamma(t) | \le 1$, we find that
 \begin{align*}
   \int_\Gamma  k |z|^{k-1} |f(z)|\, |\ud{z}|
   \lesssim_\vartheta   & \; \norm{f}_{\Hinfty(\stolz{\nu})} \;  \int_{-C_\vartheta}^{C_\vartheta} k |\gamma(t)|^{k-\einhalb}\, dt\\
   \eqsim_\vartheta  & \; \norm{f}_{\Hinfty(\stolz{\nu})} \;  \int_{-C_\vartheta}^{C_\vartheta} k |\gamma(t)|^{k-1}\, dt.
 \end{align*}
 We split this integral in two parts: the first part where
 $t \in (-\eps, \eps)$ is far away from $z{=}1$. The second part
 $|t|\ge \eps$ contains the point $z{=}1$. We fix some $\eps>0$, taken
 sufficiently small to satisfy $\vartheta \cos(\eps) > 1$.  In the
 first region, we have $|\gamma(t)|\le q < 1$ and so
 $\sup_k k q^{k} \le \sup_{t>0} t q^{-t} = |\ln(q)|e^{-1}$ shows
  \[
    \int_{|t|< \eps } |f(\gamma(t))| \;  k |\gamma(t)|^{k-1} \,\ud{t} \le  \norm{f}_\infty  \; |\Gamma|\, |\ln(q)|e^{-1}.
  \]
 When $|t|\ge \eps$, we  use a change of variables: since 
 $r'(t) = \tfrac{2\vartheta^2}{\vartheta^2-1} \sin(t)$,
 \begin{align*}
   \int_{\eps< |t| < C_\vartheta} |f(\gamma(t))| \; k |\gamma(t)|^{k-1} \,\ud{t}
   \lesssim_\vartheta  & \; \norm{f}_\infty \;  \int_{\eps< |t| < C_\vartheta} k \left( 1-\tfrac{r(t)}{\vartheta} \right)^{k-1} \,\ud{t}\\
   \lesssim_{\vartheta}  & \; \norm{f}_\infty \; \tfrac{1}{\sin(\eps)}  \int_{\eps< |t| < C_\vartheta}  k \sin(t) \left( 1-\tfrac{r(t)}{\vartheta} \right)^{k-1}     \,dt \\ 
   = & \; \norm{f}_\infty \; \tfrac{1}{\sin(\eps)} \int_0^{\tfrac{2}{\vartheta^2-1}(\alpha\cos(\eps)-1)} k \left( 1-s \right)^{k-1} \,\ud{s} \\
   \le & \; \norm{f}_\infty \; \tfrac{1}{\sin(\eps)} \left[ -(1-s)^k \right]_{s=0}^{s=\tfrac{2}{\vartheta^2-1}(\alpha\cos(\eps)-1)} \\
   \le & \; \tfrac{1}{\sin(\eps)} \norm{f}_\infty.
 \end{align*}
 and the proof is complete: we refrain from optimising over all
 possible $\eps>0$ in both inequalities.
\end{proof}

\subsection*{Open question: }
\noindent For a given Ritt operator, the link between its $\Hinfty$
functional calculus and its associated square function estimates
resembles the analogous link for sectorial (or strip type) operators
$A$. In fact, the proofs we present are built up using the same
abstract principles.

\noindent For both, Ritt and sectorial operators on Banach spaces of
finite cotype, the functional calculus (if it is bounded) can be
extended to so-called ``bounded square functional'' (or ``quadratic'')
$\Hinfty$-calculus, verifying
\begin{equation}  \label{eq:sqf-calculus}
 \EE \Bignorm{ \sum_n \gamma_n  f_n(T)x  } \quad \lesssim \quad \sup_z \left( \sum_n |f_n(z)|^2 \right)^{\einhalb} \norm{x}.
\end{equation}
In the case of sectorial (or strip type) operators, the square
functional calculus can be deduced directly from
Theorem~\ref{thm:int-repres} via  integral representations
that make direct use of ``standard'' square functions.
In the case of Ritt operators however, this does not seem to
work. The integral representation analogous to the sectorial
situation would be
\begin{equation}
  \label{eq:expect}
  f(z) = \sum_k  a_k \; \varphi_{m_1}(k,z) \varphi_{m_2}(k,z)
\end{equation}
where the coefficients $a_k$ do not depend on the variable $z$ (unlike
the formula \eqref{eq:repres-formula} above). Despite this lack, the
validity of \eqref{eq:sqf-calculus} was shown in Le~Merdy's paper
\cite{LeMerdy:Ritt2014}.  Instead of using the assumed (dual) square
functions of the functions $\varphi_m(z)$, he rather bootstraps the
square functional calculus from the (scalar) $\Hinfty$-calculus by
means of the Francks-McIntosh decomposition that produce a kind of
``brute force square functions'' for a collection of functions which
are not explicitly known. Our integral representation
theorem~\ref{thm:int-repres} can then be applied to the
Francks-McIntosh decomposition.

\noindent From this observation two question emerge: first, we would like
to identify  the (non-separable) subspaces
\[
  \calS_{m_1, m_2} := \left\{  \sum_k  a_k \; \varphi_{m_1}(k,z) \varphi_{m_2}(k,z): \qquad (a_k)\in \ell_\infty \right\}
\]
and their sum in $ \Hinfty(\stolz{\omega})$.  Second, we expect that
other, explicit square functions for Ritt operators are to be found
that will, allow amongst others, general representation formulas. This
question might be linked to the conformal map of Stolz regions to the
unit disc, that seem not to be explicitly known, either.

\appendix

\section{Tools from abstract functional calculus theory}\label{sec:tools}

We cite in this section to abstract results borrowed from
Haak-Haase \cite{HaakHaase}.  We base our results on these results to emphasise
that the $\Hinfty$-calculus theory of Ritt operators as well as that
of sectorial or strip type operators can be explained with a unified
abstract approach to holomorphic functional calculus. For both
theorems we require that $(\Hinfty(\calO), \Phi)$ be a functional
calculus on the Banach space $X$ satisfying the abstract construction
principles outlayed in \eqref{eq:abstr-fc-extension} and
\eqref{eq:bp-conv-requirement}: in particular they do apply to Ritt
operators $T$, for which $(\Id{-}T)$ is injective and has rense range.

\begin{theorem} \label{thm:int-repres}
  Let $K$ be a Hilbert space and $H := L^{2}(\Omega)$ for
  some measure space $(\Omega, \mu)$. Suppose further that
  $f,\: g \in \Hinfty(\calO;H)$ such that the  square
  function associated with $g$,
  \[
    \Phi_{\gamma}(g): X\to \gamma(H;X),
  \]
  as well as the dual square function associated with $f$,
  \[
    \Phi_{\gamma'}(f): X'\to \gamma'(H;X'),
  \]
  are bounded.  Consider, for $m \in L^{\infty}(\Omega;K')$, the
  function $u\in \Hinfty(\calO;K')$ defined by
\begin{equation}\label{eq:int-repres}
  u(z) := \int_\Omega m(t) \cdot f(t,z)\, g(t,z)\, \mu(\ud{t})\,\,  \in\,\,  K' \qquad (z\in \calO).
\end{equation}
If $K$ is finite-dimensional or if $X$ has finite cotype and
 $x\in \DOMAIN(\Phi_\gamma(g))$  then  $x\in  \DOMAIN(\Phi_\gamma(u))$
 and
\begin{equation}\label{eq:int-repres-conclusion}
 \norm{\Phi_\gamma(u)x}_\gamma
\le C\, \norm{m}_{L^{\infty}(\Omega;K')} \, 
\norm{\Phi_{\gamma'}(f)}
\norm{\Phi_\gamma(g)x}_\gamma,
\end{equation}
where $C$ depends only on  $\dim(K)$ or the cotype (constant) of $X$, respectively.

\medskip

\noindent The following strenghening holds true: if $X$ has
additionally property $(\alpha)$, then the set
\[
  \left \{ \Phi(u_m) \suchthat m \in L^{\infty}(\Omega,\mu),\,
\norm{m}_\infty \le 1 \right\}
\]
is $\gamma$-bounded, with bound
\[
  \GB{\Phi(u_m) \suchthat \norm{m}_\infty \le 1}
\le C^{+}C^{-} \norm{\Phi_\gamma(g)}_\gamma 
\norm{\Phi_{\gamma'}(f)}_{\gamma'},
\]
where $C^{+}C^{-}$ is the condition number
of the isomorphism
\[
  \gamma(\ell_2 \oplus \ell_2; X) \;\simeq\; \gamma(\ell_2; \gamma(\ell_2;X)).
\]
\end{theorem}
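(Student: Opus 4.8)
The plan is to prove Theorem~\ref{thm:int-repres}, whose statement I reproduce mentally as: given $f,g \in \Hinfty(\calO;H)$ with bounded square function $\Phi_\gamma(g)$ and dual square function $\Phi_{\gamma'}(f)$, the function $u(z) = \int_\Omega m(t)\, f(t,z)\, g(t,z)\, \mu(\ud t)$ (taking values in $K'$) again generates a bounded square function, with the stated norm estimate, and moreover, under property $(\alpha)$, the family $\{\Phi(u_m): \norm{m}_\infty \le 1\}$ is $\gamma$-bounded. Since this is cited as borrowed from \cite{HaakHaase}, the proof in the present paper will be a reference or a sketch; I will sketch the natural self-contained argument.

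\textbf{Step 1: reduce to the case $K = \CC$.} When $K$ is finite-dimensional, pick an orthonormal basis $(\kappa_i)$ of $K$ and write $m(t) = \sum_i m_i(t)\, \kappa_i^*$ with $m_i \in L^\infty(\Omega)$; then $u = \sum_i u_i$ with $u_i(z) = \int_\Omega m_i(t)\, f(t,z)\, g(t,z)\, \mu(\ud t)$, and each $u_i$ is a scalar-valued multiplier integral. By the triangle inequality in $\gamma(H;X)$, it suffices to handle each $u_i$ and sum, at the cost of a factor $\dim K$. When instead $X$ has finite cotype, one works with Gaussian sums directly and the dimension of $K$ plays no role; so in both cases the heart of the matter is $K = \CC$, i.e. a scalar bounded multiplier $m \in L^\infty(\Omega,\mu)$ with $\norm{m}_\infty \le 1$.

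\textbf{Step 2: the bilinear pairing identity.} For $x \in \DOMAIN(\Phi_\gamma(g))$ and $x' \in X'$, expand $\dprod{\Phi(u)x}{x'}$ against an orthonormal basis $(e_n)$ of a second copy of $\ell_2$ (or directly in the integral form). The key algebraic step is to recognise the $z$-integral defining $u$ as an $H$-inner product: $u(z) = \dprod{m\, \overline{f(\cdot,z)}}{g(\cdot,z)}_H$ up to conjugation conventions, so $\Phi(u)x$ factors, via the functional calculus of $T$, as a composition $\Phi_\gamma(g)x$ followed by the (pointwise in $z$) multiplication-by-$m$ operator on $H$, followed by the dual pairing with $\Phi_{\gamma'}(f)$. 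Concretely one inserts a Gaussian sum $\sum_n \gamma_n \idual{\cdot}{e_n}{H}$ and applies Cauchy--Schwarz in $L^2(\Omega;\,\cdot\,)$, obtaining
\begin{equation*}
  |\dprod{\Phi(u)x}{x'}| \le \norm{m}_\infty \,\bignorm{\Phi_\gamma(g)x}_{\gamma(H;X)}\, \bignorm{\Phi_{\gamma'}(f)x'}_{\gamma'(H;X')}.
\end{equation*}
Taking the supremum over $\norm{x'}_{X'}\le 1$ and using boundedness of $\Phi_{\gamma'}(f)$ yields \eqref{eq:int-repres-conclusion}. The multiplier $m$, being $\le 1$ in $L^\infty$-norm, acts as a contraction on $L^2(\Omega)$-valued sequences pointwise, which is exactly what Cauchy--Schwarz absorbs; this is where the hypothesis $\norm{m}_\infty \le 1$ is used (a general $m$ just rescales the bound).

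\textbf{Step 3: the property $(\alpha)$ strengthening.} For $\gamma$-boundedness of $\{\Phi(u_{m_\ell}): \ell = 1,\dots,L\}$ one takes $L$ multipliers $m_\ell$ and vectors $x_\ell$ and must bound $\EE \norm{\sum_\ell \gamma_\ell \Phi(u_{m_\ell}) x_\ell}$. Repeating Step 2 with an extra Gaussian index $\ell$, one lands in a double-indexed Gaussian sum over $(n,\ell)$, and here property $(\alpha)$ enters: the contraction principle for double-indexed sums lets one replace the multiplier coefficients (the values $m_\ell(t)$, $\le 1$ in modulus) by $1$, uniformly. The identification $\gamma(\ell_2\oplus\ell_2;X) \simeq \gamma(\ell_2;\gamma(\ell_2;X))$ (a Fubini-type isomorphism for $\gamma$-spaces, whose condition number is $C^+C^-$) is what converts the double sum back into a product of a $\gamma$-norm of $\Phi_\gamma(g)$ and a $\gamma'$-norm of $\Phi_{\gamma'}(f)$, giving the stated bound.

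\textbf{Main obstacle.} The delicate point is not the Cauchy--Schwarz estimate itself but making the factorisation of $\Phi(u)$ through multiplication by $m$ rigorous at the level of the \emph{extended} functional calculus: $u$ is a priori only in $\Hinfty(\calO;K')$, not an elementary function, so $\Phi(u)$ is defined by regularisation \eqref{eq:abstr-fc-extension}, and one must check that the domain statement $x \in \DOMAIN(\Phi_\gamma(g)) \Rightarrow x \in \DOMAIN(\Phi_\gamma(u))$ is consistent with the integral representation — i.e. that the formal manipulation survives passage to the limit in the regularising sequence, using the weak-convergence axiom \eqref{eq:bp-conv-requirement} and, in the finite-cotype case, the Kwapień--Hoffmann--Jørgensen dichotomy to upgrade uniform boundedness of partial sums to convergence. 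Once the bookkeeping of domains and regularisers is in place, everything else is a careful but routine application of Cauchy--Schwarz and (for the last part) the $(\alpha)$-contraction principle plus the $\gamma$-Fubini isomorphism; since the full argument is carried out in \cite{HaakHaase}, in this paper it suffices to record the statement and refer there.
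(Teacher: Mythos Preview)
Your reading is correct: the paper does not prove Theorem~\ref{thm:int-repres} at all. It appears in the appendix under the heading ``Tools from abstract functional calculus theory'' with the explicit remark that the results are ``borrowed from Haak--Haase \cite{HaakHaase}'', and no proof or sketch is given in the paper itself. So there is nothing to compare your argument against on the paper's side.

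That said, your sketch is a reasonable outline of the standard argument. The factorisation $\Phi(u) = (\text{pairing with }\Phi_{\gamma'}(f)) \circ (\text{multiplication by }m) \circ \Phi_\gamma(g)$ together with trace-duality Cauchy--Schwarz is exactly the mechanism, and you correctly flag that the honest work lies in the domain bookkeeping for the extended calculus. One small comment on Step~1: your reduction to $K=\CC$ via an orthonormal basis of $K$ and the triangle inequality produces a constant depending on $\dim K$, which matches the first alternative in the statement; but the finite-cotype alternative is supposed to give a constant \emph{independent} of $\dim K$, so in that branch one should not decompose coordinate-wise but rather work directly with $K'$-valued Gaussian sums (using that on a finite-cotype space Gaussian and Rademacher sums are comparable, and that the $\gamma$-norm in $\gamma(K;X)$ does not see the dimension of $K$). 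Your phrasing ``the dimension of $K$ plays no role'' is right in spirit but the mechanism you describe (triangle inequality over a basis) would reintroduce it; the correct route in the finite-cotype case bypasses the coordinate decomposition altogether.
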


The following result form \cite{HaakHaase} gives an abstract tool to
infer lower square function estimates for a function $g$ from upper
ones of a function $f$ under an ``identity condition'' on the pointwise
scalar products $\sprod{f(z)}{g(z)}_H$.

\begin{theorem}\label{thm:identity-and-upper-give-lower}
  Suppose that $f \in \Hinfty(\calO;H)$ and $g\in \Hinfty(\calO;H')$
  are such that $\Phi_\gamma(g)$ and $\Phi_{\gamma'}(f)$ are bounded
  operators.  If $\sprod{f(z)}{g(z)}_H = 1$ for all $z \in \calO$,
  one has the norm equivalence
  \[
       \norm{x}_X \simeq \norm{\Phi_\gamma(g)x}_{\gamma}  \quad \text{for all $x\in X$}.
  \] 
\end{theorem}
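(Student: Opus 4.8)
The plan is to split the claimed equivalence $\norm{x}_X \simeq \norm{\Phi_\gamma(g)x}_\gamma$ into its two halves. The inequality ``$\lesssim$'' is nothing but the assumed boundedness of the square function $\Phi_\gamma(g)$, so all the work goes into the lower estimate $\norm{x}_X \lesssim \norm{\Phi_\gamma(g)x}_\gamma$. For this I would simply feed the data into Theorem~\ref{thm:int-repres}: take $K := \CC$ (which is finite-dimensional, so no cotype hypothesis on $X$ is needed), realise $H$ as $L^2(\Omega)$ with $\Omega = \NN$ equipped with counting measure, and choose the constant multiplier $m \equiv 1$. The function of \eqref{eq:int-repres} associated with these data is then
\[
  u(z) = \int_\Omega 1\cdot f(t,z)\,g(t,z)\,\mu(\ud t) = \sprod{f(z)}{g(z)}_H = 1,
\]
the constant function $1$. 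Since $\Phi_\gamma(g)$ is bounded, the domain condition $x\in\DOMAIN(\Phi_\gamma(g))$ is automatic, so Theorem~\ref{thm:int-repres} applies and yields $x\in\DOMAIN(\Phi_\gamma(u))$ together with $\norm{\Phi_\gamma(u)x}_\gamma \le C\,\norm{\Phi_{\gamma'}(f)}\,\norm{\Phi_\gamma(g)x}_\gamma$.

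To finish, it remains only to identify the left-hand side: under the canonical identification $\gamma(\CC;X)\simeq X$ one has $\Phi_\gamma(u) = u(T) = 1(T) = \Id$, hence $\norm{\Phi_\gamma(u)x}_\gamma = \norm{x}_X$ and the lower estimate $\norm{x}_X \le C\,\norm{\Phi_{\gamma'}(f)}\,\norm{\Phi_\gamma(g)x}_\gamma$ drops out. Conceptually, what makes this work is the ``reproducing formula'' sitting behind Theorem~\ref{thm:int-repres}: the pointwise identity $\sprod{f(z)}{g(z)}_H = 1$, pushed through the multiplicativity of the functional calculus and the trace-duality pairing between $\gamma(\ell_2;X)$ and $\gamma'(\ell_2;X')$, amounts to $\dprod{x}{x'} = \dprod{\Phi_\gamma(g)x}{\Phi_{\gamma'}(f)x'}$ for all $x\in X$ and $x'\in X'$; then $|\dprod{x}{x'}| \le \norm{\Phi_\gamma(g)x}_\gamma\,\norm{\Phi_{\gamma'}(f)x'}_{\gamma'}$ and one takes the supremum over $\norm{x'}_{X'}\le 1$. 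If a self-contained argument were wanted instead of the appeal to Theorem~\ref{thm:int-repres}, this is precisely the chain of identities one would have to set up by hand.

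The genuinely delicate point — already absorbed into Theorem~\ref{thm:int-repres}, but worth recalling as the real obstacle — is the passage of a pointwise-convergent, uniformly bounded series through the functional calculus. Writing $(e_n)$ for an orthonormal basis of $H$, one has $\sum_{n\le N}\sprod{f(z)}{e_n}\sprod{g(z)}{e_n} \to \sprod{f(z)}{g(z)}_H = 1$ on $\calO$ with partial sums bounded by $\norm{f}_\infty\norm{g}_\infty$ (Cauchy--Schwarz), and one must invoke the weak convergence requirement \eqref{eq:bp-conv-requirement} — after the usual reduction to the dense set of points of the form $e(T)y$, on which the convergence is in fact strong by dominated convergence — in order to conclude that the corresponding operators $\sum_{n\le N}(\sprod{f}{e_n}\sprod{g}{e_n})(T)$ converge to $\Id$. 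This is exactly where the standing hypothesis that $(\Id-T)$ is injective with dense range is used; the remaining steps are routine bookkeeping with the ideal property of $\gamma$-norms (to move between orthonormal and Riesz bases) and Cauchy--Schwarz.
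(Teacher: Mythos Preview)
The paper does not prove this theorem; it is quoted in the appendix as a tool borrowed from \cite{HaakHaase}, so there is no ``paper's own proof'' to compare against. Your argument is correct and is in fact the natural way to derive Theorem~\ref{thm:identity-and-upper-give-lower} from Theorem~\ref{thm:int-repres}: the upper bound is immediate from the assumed boundedness of $\Phi_\gamma(g)$, and for the lower bound you specialise Theorem~\ref{thm:int-repres} to $K=\CC$ and $m\equiv 1$, so that the resulting $u$ is the constant function $1$ and $\Phi_\gamma(u)=\Id$ under $\gamma(\CC;X)\simeq X$. Your side remarks on the duality reformulation $\dprod{x}{x'}=\dprod{\Phi_\gamma(g)x}{\Phi_{\gamma'}(f)x'}$ and on the role of the convergence requirement \eqref{eq:bp-conv-requirement} are accurate and identify precisely where the standing injectivity/dense-range hypothesis enters.
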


\def\SUBMITTED{Submitted}
\def\TOAPPEAR{To appear in }
\def\PREPARATION{In preparation }

\def\cprime{$'$}
\providecommand{\bysame}{\leavevmode\hbox to3em{\hrulefill}\thinspace}

\end{document}